\documentclass[12pt]{amsart}
\usepackage{amssymb,amsmath,url,graphicx, mathtools}
\usepackage[usenames,dvipsnames,svgnames,table]{xcolor}
\usepackage{bm}

\usepackage[all]{xy}
\usepackage{enumitem}
\usepackage{graphicx}
\usepackage[hidelinks]{hyperref}
\usepackage[margin=1in]{geometry}
\usepackage{bbm}
\usepackage{aliascnt}
\usepackage{cleveref}

\newif\ifinfootnote
\infootnotefalse

\let\footnoteasusual\footnote
\renewcommand{\footnote}[1]
{\infootnotetrue\footnoteasusual{#1}\infootnotefalse}

\newif\ifdebug
\debugfalse

\newcommand{\onote}[1] {\ifdebug {{\color{blue} \footnotesize #1}} \else \fi}
\newcommand{\N}{\mathbb{N}}
\newcommand{\Z}{\mathbb{Z}}
\newcommand{\Q}{\mathbb{Q}}
\newcommand{\R}{\mathbb{R}}
\newcommand{\C}{\mathbb{C}}

\newcommand{\bigslant}[2]{{\raisebox{.2em}{$#1$}\left/\raisebox{-.2em}{$#2$}\right.}}

\DeclareMathOperator{\conv}{conv}
\DeclareMathOperator{\vol}{vol}
\def \GL {\mathsf{GL}}

\def \supp {\operatorname{supp}}

\def \calW {{\mathcal W}}
\def \calO {{\mathcal O}}

\def \tQ {\tilde{Q}}

\setlist{topsep=0pt,itemsep=6pt}

\swapnumbers
\numberwithin{equation}{section}

\crefname{theorem}{theorem}{theorems}
\Crefname{theorem}{Theorem}{Theorems}
\crefname{corollary}{corollary}{corollaries}
\Crefname{corollary}{Corollary}{Corollaries}
\crefname{lemma}{lemma}{lemmas}
\Crefname{lemma}{Lemma}{Lemmas}
\crefname{proposition}{proposition}{propositions}
\Crefname{proposition}{Proposition}{Propositions}
\crefname{definition}{definition}{definitions}
\Crefname{definition}{Definition}{Definitions}
\crefname{claim}{claim}{claims}
\Crefname{claim}{Claim}{Claims}
\crefname{conjecture}{conjecture}{conjectures}
\Crefname{conjecture}{Conjecture}{Conjectures}
\crefname{construction}{construction}{constructions}
\Crefname{construction}{Construction}{Constructions}
\crefname{fact}{fact}{facts}
\Crefname{fact}{Fact}{Facts}
\crefname{notation}{notation}{notations}
\Crefname{notation}{Notation}{Notations}
\crefname{remark}{remark}{remarks}
\Crefname{remark}{Remark}{Remarks}
\crefname{example}{example}{examples}
\Crefname{example}{Example}{Examples}
\crefname{convention}{convention}{conventions}
\Crefname{convention}{Convention}{Conventions}

\newtheorem {theorem}{Theorem}[section]
\newaliascnt{thm}{theorem}

\aliascntresetthe{thm}

\newaliascnt{lemma}{theorem}
\newtheorem {lemma}[lemma]{Lemma}
\aliascntresetthe{lemma}

\newaliascnt{claim}{theorem}

\aliascntresetthe{claim}

\newtheorem {claim*}{Claim}

\newaliascnt{conjecture}{theorem}

\aliascntresetthe{conjecture}

\newaliascnt{corollary}{theorem}
\newtheorem {corollary}[corollary]{Corollary}
\aliascntresetthe{corollary}

\newaliascnt{proposition}{theorem}
\newtheorem {proposition}[proposition]{Proposition}
\aliascntresetthe{proposition}

\newtheorem*{proposition*}{Proposition}

\newaliascnt{construction}{theorem}

\aliascntresetthe{construction}

\newaliascnt{fact}{theorem}

\aliascntresetthe{fact}

\theoremstyle{definition}
\newaliascnt{definition}{theorem}
\newtheorem{definition}[definition]{Definition}
\aliascntresetthe{definition}

\newaliascnt{notation}{theorem}
\newtheorem{notation}[notation]{Notation}
\aliascntresetthe{notation}

\theoremstyle{remark}
\newaliascnt{remark}{theorem}
\newtheorem{remark}[remark]{Remark}
\aliascntresetthe{remark}

\newtheorem*{remark*}{Remark}
\newaliascnt{example}{theorem}

\aliascntresetthe{example}

\newaliascnt{convention}{theorem}

\aliascntresetthe{convention}

\makeatletter
\newcommand{\tpitchfork}{%
  \vbox{
    \baselineskip\z@skip
    \lineskip-.52ex
    \lineskiplimit\maxdimen
    \m@th
    \ialign{##\crcr\hidewidth\smash{$-$}\hidewidth\crcr$\pitchfork$\crcr}
  }%
}
\makeatother

\usepackage[backend=biber]{biblatex}
\newcommand{\AZGL}[1]{\mathrm{A}_{\mathbb Z}\mathrm{GL}(#1,\mathbb Z)}

\def \on {\operatorname}
\def \Int {\on{Int}}
\def \RInt {\on{relint}}

\def \Vol {\on{vol}}

\begin{document}

\title{Ehrhart theorem for integral-integral affine manifolds with corners}
\author{Oded Elisha}
\address{School of Mathematical Sciences, Tel Aviv University}
\email{odedelisha@mail.tau.ac.il}
\date{\today}

\keywords{Integral affine structure, Ehrhart Theory}

\subjclass[2020]{Primary 53C15; Secondary 52B20, 53D50, 53A15}
\begin{abstract}
 We study lattice-counting functions on compact integral-integral affine manifolds with corners, which generalize integral unimodular polytopes. We prove analogues of Ehrhart’s theorem and Ehrhart--Macdonald reciprocity in this setting. 
\end{abstract}

\maketitle

\section{Introduction}

In Ehrhart theory, one studies the lattice-counting function $L_P(m)=\#(m^{-1}\Z^n \cap P)$ where $P\subseteq \R^n$ is a rational polytope. In this paper we consider compact integral-integral affine manifolds with corners. These spaces generalize integral unimodular polytopes while still allowing to define a lattice-counting function.
An integral-integral affine manifold with corners $M$ is modeled on the standard orthant $\R_{\ge 0}^n$; where transition maps are locally of the form 
\begin{equation} \label{eq:iia}
x \mapsto Ax +b
\quad 
\text{ with $A \in \GL(n,\Z)$ and $b \in \Z^n$ }\,. 
\end{equation}
An \textbf{integral chart} is a chart of the integral-integral affine structure with corners, the set of lattice points of order $m\in \N$ in $M$ is
    \begin{multline*}
        M_{{m^{-1}}\Z} \coloneq \{x\in M \mid \text{ there exists 
         an integral chart } \psi \colon U\rightarrow \Omega \subseteq \R_{\ge 0}^n \\
         \text{ whose domain contains $x$ and } \psi(x)\in m^{-1}\Z^n\}
    \end{multline*}
and the \textbf{lattice-counting function} is
\begin{align*}
    &L_M \colon \N \rightarrow \N \cup \{0,\infty\} \\
    &L_M(m)\coloneq\#M_{m^{-1}\Z}\,.
\end{align*}

Ehrhart's theorem states that, if $P\subseteq \R^n$ is an integral polytope, then its lattice counting function $L_P(m)$ is polynomial. Furthermore, the Ehrhart-Macdonald reciprocity is \[
    L_{\RInt P}(m)=(-1)^{\dim P}L_P(-m)\,,
\]
where $\RInt P$ is the interior of $P$ relative to its affine span.

We extend both theorems for compact integral-integral affine manifolds with corners. We prove that the lattice-counting function $L_M$ is a polynomial and that evaluating $L_M$ at the negative integers is, up to sign, the lattice-counting function of the interior (\Cref{generalized_ehrhart}). As in the case of integral polytopes, we show that the leading coefficient is the total (affine) volume and the constant coefficient is the Euler characteristic (\Cref{leading_constant_coefficients}).

The general idea is as follows: First, show that a compact integral-integral affine manifold with corners can be covered by a finite collection of rational polytopes, whose non-empty intersections are also rational polytopes. By Ehrhart theorem for rational polytopes and inclusion-exclusion, this gives that the lattice-counting function of the manifold is quasi-polynomial.
Then, use some form of asymptotics in order to conclude further properties of the lattice-counting function.

This idea, but for closed integral-integral affine manifolds, was used in \cite{elisha2026integralpointsvolumeintegralintegral} along with the Poisson summation formula. In this paper, we instead use local Euler-Maclaurin-type asymptotics that are due to Berline and Vergne \cite{berline_local_2016}.

One motivation for this project comes from symplectic geometry, where a special case of a generalization of Ehrhart's theorem was used in the author's previous paper \cite[Corollary 3.14]{elisha2026integralpointsvolumeintegralintegral} to prove a theorem about closed integral-integral affine manifolds. There, the integral-integral affine structure arises naturally on the base of a regular Lagrangian fibration equipped with a prequantization line bundle (see Karshon, Hamilton and Yoshida \cite[Section 4]{hamilton_integral-integral_2024}). A more general Ehrhart theory for compact integral-integral affine 
manifolds with corners may be useful in studying prequantized Lagrangian fibrations
with elliptic singularities (see Sepe and Vũ Ngoc \cite[Remark 4.20]{sepe_integrable_2018}, Maarten Mol \cite{mol2023} and Maarten Mol and Rui Fernandes \cite{fernandes2026}).
Related asymptotic methods in the context of geometric quantization appear in Loizides, Paradan and Vergne \cite{loizides_semi-classical_2021}.

A second motivation comes from gluing the facets of integral polytopes. More generally, if $\Gamma$ is a group of transformations of the form (\ref{eq:iia}) acting freely and properly (on $\R^n$), then the quotient $\bigslant{\R^n}{\Gamma}$ is an integral-integral affine manifold. In many examples, a fundamental domain for this quotient is an integral polytope. In this case, some integral polytopes contained in the fundamental domain project to compact integral-integral affine manifolds with corners. For example, when $\Gamma=\{x\mapsto x+a\mid a\in 3\Z^2\}$, the quotient is a torus and the rectangle $[0,3]\times [1,2]$ projects to a cylinder $C$. This cylinder is obtained from an integral unimodular polytope with two of its facets glued and its lattice-counting function is well-defined and polynomial. In particular, its lattice-counting function is
\[
    L_{C}(m)=3m^2+3m=\Vol(C)m^2+3m+\chi(C)
\]
and its interior's lattice-counting function is
\[
    L_{\Int C}(m)=3m^2-3m=\Vol(\Int C)m^2-3m+\chi(\Int C)\,.
\]

Further examples arise by taking a compact integral--integral affine manifold with corners $F$ and an integral--integral affine automorphism $\psi:F\rightarrow F$. The space \[
    M\coloneq \bigslant{(\R \times F)}{\sim},\quad (t,x)\sim (t+1, \psi(x))
\] is a compact integral--integral affine manifold with corners and its lattice-counting function is $L_M(m)=L_{S^1}(m)\cdot L_{F}(m)=mL_F(m)$.

When $F=S^1\times [0,1]$ and $\psi(\varphi,x)=(\varphi+x,x)$ we obtain a manifold with boundary diffeomorphic to $\mathbb{T}^2\times [0,1]$. The lattice-counting function is $L_M(m)=m^2(m+1)$ and $M$ has a non-trivial linear monodromy group which is neither finite nor orthogonal.
When $F=[0,1]^2$ and $\psi(x,y)=(1-x,y)$ we obtain a manifold with corners diffeomorphic to the product of a Möbius strip with $[0,1]$. Moreover, its linear monodromy group is finite and does not preserve orientation, and its lattice-counting function is $L_M(m)=m(m+1)^2$.

\subsection*{Acknowledgements}

I thank Yael Karshon for research guidance and for insightful discussions. 
I thank Rui Fernandes, Daniele Sepe and Camilo Arias Abad for suggesting that Ehrhart's theorem can be generalized to integral--integral affine manifolds with corners by rescaling the integral--integral affine structure.
Another special thanks goes to Yiannis Loizides, who introduced us to the use of Ehrhart theory with asymptotics.

O.\ Elisha is partly supported by Yaron Ostrover's
ISF Grant 938/22.

\section{Rational polyhedra}

\begin{notation}
    Our convention for the set of natural numbers is 
    \[\N \coloneq \{ 1, 2, \ldots \}\,.\] 
    For every $n\in \N$, \[[n]\coloneq \{1,\dots, n\}\,.\]
\end{notation}
\begin{notation}
    \textbf{The standard orthant} in $\R^n$ is \[
        \R_{\ge 0}^n \coloneq \{x\in \R^n \mid x_i \ge 0 \;\forall 1 \le i \le n\}
    \]
    and its interior is
    \[
        \R_{>0}^n\coloneq \{x\in \R^n \mid x_i>0\; \forall 1 \le i \le n\}\,.
    \]
\end{notation}

\begin{notation}
$\AZGL{n}$ is 
the set of maps $\R^n \to \R^n$ of the form 
$x \mapsto Ax +b$ with $A \in \GL(n,\Z)$ and $b \in \Z^n$.
\textbf{An integral-integral affine map} is an element $\psi\in \AZGL{n}$.
\end{notation}

\begin{definition}
A map $f \colon \Omega \rightarrow \R^n$ on an open subset $\Omega \subseteq \R^n$ is called \textbf{locally integral-integral affine} if 
each point has a neighbourhood on which the map
agrees with an integral-integral affine map.
\end{definition}

\begin{definition}
\label{polytope_def}
    A set $P\subseteq \R^n$ is a \textbf{rational polytope} if there exist $v_1,\dots,v_k \in \Q^n$, such that 
    $P=\conv\{v_1,\dots,v_k\}=\{\sum_{i=1}^k{\alpha_iv_i}\mid \alpha_i \ge 0,\; \sum_{i=1}^k{\alpha_i}=1\}$.

    The \textbf{dimension of $P$} is the dimension of the affine span of $P$, denoted $\dim P$.

    The \textbf{relative interior} of $P$, denoted $\RInt P$, is its interior relative to its affine span.

    A point $x\in P$ is called a \textbf{vertex} of $P$ if $x$ is an extreme point in P.
    That means that if $x=\lambda x_1+(1-\lambda) x_2$ for $0 \le \lambda \le 1$ where $x_1,x_2\in P$ and $x_1\neq x_2$, then $\lambda=0$ or $\lambda =1$.
\end{definition}

\begin{definition}
    \label{lattice_count_function}
    Let $P\subseteq \R^n$ be a rational polytope. Its \textbf{lattice-counting function} is
    \begin{align*} &L_P \colon \N \rightarrow \N \cup \{0\}\\
        &L_P(m)\coloneq\#((m^{-1}\Z^n) \cap P) = |\{x \in P \mid mx \in \Z^n \}| \,.
    \end{align*}
    Similarly the lattice-counting function of its relative interior is
    \begin{align*}
        &L_{\RInt P}(m)\coloneq\#((m^{-1}\Z^n) \cap \RInt P) = |\{x \in \RInt P \mid mx \in \Z^n \}| \,.
    \end{align*}
\end{definition}

\begin{definition}  \label{quasi-polynomial}
A function $f \colon \Z \rightarrow \R$ is \textbf{quasi-polynomial} if
there exist $n \in \N \cup\{0\}$
and periodic functions $\{a_k \colon \Z \rightarrow \R\}_{k=0}^n$ such that for all $m\in \Z$
    \[\displaystyle f(m)=\sum_{k=0}^n{a_k(m)m^k}\,.\]
A number $d\in \N$ is \textbf{a period of $f$} if $f(md+k)$ is a polynomial in $m$ for all $0\le k<d$.
\end{definition}

\begin{theorem}[Rational Ehrhart]
    \label{rational_ehrhart}
    Let $P\subseteq\R^n$ be a rational polytope. 
    Then
    \begin{enumerate}
        \item $L_P(m)$ coincides with a quasi-polynomial with $a_k\equiv 0$ for all $k>\dim P$.
        \item The least common multiple of all the denominators of all the coordinates of the vertices of $P$ is a period of $L_P$.
        \item If $P$ is $n$-dimensional then the leading term is $\Vol(P) \cdot m^n$.
        \item (Ehrhart-Macdonald reciprocity) $L_{\RInt P}(m)=(-1)^{\dim P}L_P(-m)$.
        \item If $P$ is a lattice polytope (i.e. every vertex lies in $\Z^n$) then $L_P$ coincides with a polynomial whose constant coefficient is 1.
    \end{enumerate}
\end{theorem}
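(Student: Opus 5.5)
This is the classical rational Ehrhart theorem, so the plan is to reduce everything to the generating-function proof via cones and simplicial decompositions. The first step is a reduction to simplices. I triangulate $P$ into rational simplices using only the vertices of $P$, so that no new denominators appear. I then write the indicator function of $P$ as an inclusion--exclusion combination of indicators of relatively open simplices: $P$ is the disjoint union of the relative interiors of all faces of the triangulation. Each face is a rational simplex $\Delta$ whose vertices have coordinates with denominators dividing $d$, where $d$ is the lcm in (2). It therefore suffices to prove (1)--(2) and the reciprocity statement for relatively open rational simplices. Sums of quasi-polynomials of degree $\le \dim P$ with common period $d$ are again of this kind.

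For a simplex $\Delta$ of dimension $k$ with vertices $v_0,\dots,v_k$, I cone over it. Set $w_i=(d v_i, d)\in\Z^{n+1}$ and $C=\operatorname{cone}(w_0,\dots,w_k)$. Then $L_{\RInt\Delta}(m)$ is the number of lattice points of $\RInt C$ at height $m$. Each such point is uniquely $p+\sum_i c_i w_i$ with $c_i\in\Z_{\ge0}$ and $p$ in the half-open fundamental parallelepiped $\Pi=\{\sum \lambda_i w_i:0<\lambda_i\le 1\}$, a finite set. This gives the generating function
\[\sum_{m\ge1}L_{\RInt\Delta}(m)t^m=\frac{\sum_{p\in\Pi\cap\Z^{n+1}}t^{p_{n+1}}}{(1-t^d)^{k+1}}.\]
The analogous formula for the closed simplex uses $0\le\lambda_i<1$. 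Since the denominator is a power of $1-t^d$, the coefficients form a quasi-polynomial of degree $\le k$ with period $d$, which proves (1) and (2).

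Reciprocity (4) then follows from Stanley's reciprocity for rational functions of this form. The map $p\mapsto \sum w_i - p$ bijects the open-type and closed-type parallelepipeds. This gives the identity $F_{\RInt\Delta}(t)=(-1)^{k+1}F_{\Delta}(1/t)$, and the standard lemma says this translates to $L_{\RInt\Delta}(m)=(-1)^kL_\Delta(-m)$. For general $P$, the identity follows by applying the same relation to the decomposition of $P$ into open faces; the Euler-type relations hold because $\RInt P$ is the alternating combination via Möbius inversion on the face poset of the triangulation, with the sign $(-1)^{\dim P}$. I expect this bookkeeping step to be the main obstacle. The cleanest way through it is to use the quasi-polynomial extension together with the fact that $\sum_{F}(-1)^{\dim F}L_{\RInt F}(-m)$ over faces of a triangulated ball equals $(-1)^{\dim P}$ times the interior count, which is the topological Euler characteristic computation for a ball and its boundary sphere.

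For the remaining parts, (3) holds because $m^{-n}L_P(m)$ is a Riemann sum converging to $\Vol(P)$; this forces the leading coefficient to be constant and equal to the volume. For (5), we have $d=1$, so $L_P$ is a polynomial. Its constant term is $L_P(0)$, computed by the inclusion--exclusion over open faces: each open simplex contributes $(-1)^{\dim}$ at $m=0$ by reciprocity, since $L_\Delta(0)=1$ for a simplex from the generating function. The total is the Euler characteristic of $P$, which is $1$.
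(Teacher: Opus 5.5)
Your overall route (triangulate without new vertices, cone over each simplex, count with the fundamental parallelepiped, get simplex reciprocity from $p\mapsto\sum_i w_i-p$) is the standard one. The paper gives no argument of its own here and simply cites Beck and Robins, who proceed along these lines. Parts (1), (2), (3), (5), and reciprocity for a single simplex, go through as you describe.

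The gap is the passage from simplices to a general $P$ in (4). You flag this step but do not resolve it, and the identity you propose to use is false. By simplex reciprocity, $(-1)^{\dim F}L_{\RInt F}(-m)=L_F(m)$, so your left-hand side is $\sum_F L_F(m)$. For $P=[0,1]$ triangulated by itself this equals $1+1+(m+1)=m+3$, whereas $(-1)^{\dim P}L_{\RInt P}(m)=1-m$. What you need is $\sum_F L_{\RInt F}(-m)=(-1)^{\dim P}L_{\RInt P}(m)$, whose left side is $L_P(-m)$. Equivalently, $\sum_F(-1)^{\dim F}L_F(m)=(-1)^{\dim P}L_{\RInt P}(m)$. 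This follows from the pointwise identity $\sum_F(-1)^{\dim F}\mathbbm{1}_F=(-1)^{\dim P}\mathbbm{1}_{\RInt P}$. Evaluated at a point of $\RInt G$ for a face $G$ of the triangulation, it says that $\sum_{F\supseteq G}(-1)^{\dim F}$ equals $(-1)^{\dim P}$ if $G$ is not contained in the relative boundary of $P$, and $0$ otherwise. This is a \emph{local} Euler characteristic statement. The sum is the compactly supported Euler characteristic of a small neighbourhood in $P$ of a point of $\RInt G$. Equivalently, links of interior faces are spheres of dimension $\dim P-\dim G-1$, and links of boundary faces are balls. If your appeal to ``the Euler characteristic of a ball and its boundary sphere'' means the global numbers $\chi(P)=1$ and $\chi(\partial P)=1+(-1)^{\dim P-1}$, these only verify the identity at $m=0$; they cannot control the separate quasi-polynomials $L_{\RInt G}$ for all $m$. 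Once you prove this local lemma, the proof is complete. Alternatively, you can use a half-open decomposition of $P$ with respect to a generic point of $\RInt P$, together with the parallelepiped argument for half-open simplices, which avoids Euler characteristics altogether.
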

\begin{proof}
    See Beck and Robins \cite[Propositions 3.8, 3.19, 3.20, 3.15, 3.23, 3.34, 4.1]{sinai_robins}.
\end{proof}

\begin{proposition}[Orthant asymptotics]
    \label{orthant_asymptotics}
    Let $h: \R^n \rightarrow \R$ be a smooth compactly supported function. Then there exist $a_0,\dots,a_n\in \R$ such that \[
        \sum_{x\in m^{-1}\Z^n \cap \R_{\ge 0}^n}{h(x)}=\sum_{k=0}^na_km^k+o(1)
    \]
    as $m \rightarrow \infty$.
\end{proposition}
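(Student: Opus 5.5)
The plan is to reduce to the one-dimensional Euler--Maclaurin formula on a half-line, using a tensor-product argument together with density of product functions. An alternative is to cite Berline--Vergne's local Euler--Maclaurin formula directly. I will instead give a self-contained route.

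\emph{Step 1 (one dimension).} For $g\in C_c^\infty(\R)$ and $N\ge 1$, the Euler--Maclaurin formula on $[0,\infty)$ with step $1/m$ gives
\[
\sum_{j\ge 0} g(j/m)=m\int_0^\infty g(t)\,dt+\frac{g(0)}{2}-\sum_{k=1}^{N}\frac{B_{2k}}{(2k)!}m^{1-2k}g^{(2k-1)}(0)+R_N(m),
\]
where $|R_N(m)|\le C_N m^{-2N}\|g^{(2N)}\|_{L^1}$. So the sum equals $m\,c_1(g)+c_0(g)+\rho_g(m)$. The remainder satisfies $|\rho_g(m)|\le C m^{-1}\|g\|_{C^{2}\text{-}L^1}$, meaning a bound by finitely many derivative norms of $g$, uniformly in $m\ge1$. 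The coefficients $c_1,c_0$ are linear in $g$.

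\emph{Step 2 (products).} If $h(x)=\prod_i g_i(x_i)$, the orthant sum factors as $\prod_i\bigl(m c_1(g_i)+c_0(g_i)+\rho_{g_i}(m)\bigr)$. Expanding gives a polynomial of degree $\le n$ in $m$ plus cross terms. Each cross term contains at least one factor $\rho$, and every other factor is $O(m)$, so naively a cross term is only $O(m^{n-2})$. This is too weak, so one needs the full expansion up to $m^{-n}$ in each factor. Taking $N$ large in Step 1, each one-dimensional sum is $\sum_{k=-1}^{n}b_k(g)m^{-k}+O(m^{-n-1})$. The product is then a Laurent polynomial plus $O(m^{-1})$. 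The negative powers are $o(1)$, so the result has the required form $\sum_{k=0}^n a_k m^k+o(1)$. The coefficients $a_k$ are multilinear in the $g_i$.

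\emph{Step 3 (general $h$), the main obstacle.} Passing from products to arbitrary $h$ requires a remainder estimate uniform in $h$. Fix a box $Q=[-R,R]^n$ containing $\supp h$ and let $\chi$ be a product cutoff equal to $1$ on $Q$. Expand $h$ on a slightly larger box $[-2R,2R]^n$ in a Fourier series; this series converges rapidly, with coefficients decaying faster than any polynomial. Then $h=\chi h=\sum_{\xi}\hat h(\xi)\,\chi(x)e^{i\pi\langle\xi,x\rangle/2R}$, and each summand is a product function. The constants in Step 1 depend on finitely many derivatives of $g_i=\chi_i e^{i\pi\xi_i x_i/2R}$, so they grow only polynomially in $|\xi|$. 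Rapid decay of $\hat h(\xi)$ then makes both the coefficient series and the remainder series converge absolutely. This yields $a_k(h)=\sum_\xi \hat h(\xi)a_k(\text{product}_\xi)$ and a total remainder of $O(m^{-1})=o(1)$. The real-valuedness of $a_k$ follows by taking real parts. The point to verify carefully is the polynomial dependence of the Step 1 remainder constants on derivative norms; the explicit Bernoulli-function remainder $R_N$ makes this clear.
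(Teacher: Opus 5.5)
Your argument is correct, but it takes a different route from the paper. The paper proves the proposition in one step: it invokes Berline--Vergne's local Euler--Maclaurin theorem \cite[Theorem 5.5]{berline_local_2016} for the cone $\R^n_{\ge 0}$, truncates the expansion $\frac{1}{m^n}\sum h(x/m)\sim\sum_k b_k m^{-k}$ at order $n$, and multiplies by $m^n$. You mention this option and set it aside.

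Your route instead uses that the orthant is a product of half-lines. You combine one-dimensional Euler--Maclaurin, the product expansion for $h=\prod g_i$, and the Fourier decomposition $h=\sum_\xi \hat h(\xi)\,\chi(x)e^{i\pi\langle\xi,x\rangle/2R}$ with remainder constants polynomial in $|\xi|$. There are two harmless slips:
\begin{itemize}
\item Since $\int_0^\infty|\partial_t^{2N}g(t/m)|\,dt=m^{1-2N}\|g^{(2N)}\|_{L^1}$, your bound on $R_N$ is off by a factor of $m$ unless you use $g^{(2N+1)}$. This does not matter because $N$ is free.
\item $\chi$ must be supported in $(-2R,2R)^n$, so that $\chi$ times the periodic extension of $h$ equals $h$.
\end{itemize}

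The citation buys the paper generality, since Berline--Vergne covers semi-rational polyhedra. More importantly for its later use, it gives the description $a_k=\mathcal{F}^{-1}(T_k(\R^n_{\ge 0}))\,h$ of the coefficients, which is exactly what the proof of \Cref{local_reciprocity} manipulates through the functions $S(C)$.

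Your route buys an elementary, self-contained proof with explicit coefficients. It would also make \Cref{local_reciprocity} almost immediate. In one variable, $\sum_{j\ge 1}g(j/m)=\sum_{j\ge 0}g(j/m)-g(0)$. Apart from $m\int_0^\infty g$ and $g(0)/2$, only odd powers $m^{1-2k}$ occur, so the truncated expansion for the open half-line is exactly $-1$ times the closed one evaluated at $-m$. Taking products gives the sign $(-1)^n$ for product functions, and your Fourier step extends this to all $h$ without the distributions $T_k$.
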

\begin{proof}
    By Berline and Vergne \cite[Theorem 5.5]{berline_local_2016} the function $h$ along with the standard orthant and the standard integer lattice admit an asymptotic expansion \[
        \frac{1}{m^n}\sum_{x\in \R_{\ge 0}^n \cap \Z^n}{h(\frac{x}{m})}\sim \sum_{k=0}^\infty{\frac{b_k}{m^k}}
    \]
    as $m \rightarrow \infty$.
    In particular,
    \[
        \frac{1}{m^n}\sum_{x\in m^{-1}\Z^n\cap \R^n_{\ge 0}}{h(x)}=\sum_{k=0}^n{\frac{b_k}{m^k}}+O\left(\frac{1}{m^{n+1}}\right)
    \]
    as $m\rightarrow \infty$. Then multiplying both sides by $m^n$ 
    \[
        \sum_{x\in m^{-1}\Z^n\cap\R^n_{\ge 0}}{h(x)}=\sum_{k=0}^n{b_km^{n-k}}+O\left(\frac{1}{m}\right)
    \]
    as $m\rightarrow \infty$.
    The claim then follows by setting $a_k\coloneq b_{n-k}$.
\end{proof}

\begin{lemma}{(Local Reciprocity)}
    \label{local_reciprocity}
    Let $h:\R^n \rightarrow \R$ be a smooth compactly supported function and let $a_0,\dots,a_n\in \R$ be such that \begin{align*}
        \sum_{x\in m^{-1}\Z^n\cap \R_{\ge 0}^n}{h(x)}=\sum_{k=0}^n{a_km^k}+o(1)\,.
    \end{align*} Then \[
        \sum_{x\in m^{-1}\Z^n\cap \R^n_{>0}}{h(x)}=(-1)^n\sum_{k=0}^n{a_k(-m)^k}+o(1)\,.
    \]
\end{lemma}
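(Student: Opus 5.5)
Inclusion--exclusion over the faces of the orthant reduces the open-orthant sum to closed sums on lower-dimensional orthants, and each of these has a polynomial expansion by \Cref{orthant_asymptotics}. For $S\subseteq[n]$ let $F_S=\{x\in\R^n_{\ge0}\mid x_i=0\ \forall i\in S\}$, a closed coordinate orthant of dimension $n-|S|$. Inclusion--exclusion on the indicator functions gives
\[
\sum_{x\in m^{-1}\Z^n\cap\R^n_{>0}}h(x)=\sum_{S\subseteq[n]}(-1)^{|S|}\sum_{x\in m^{-1}\Z^n\cap F_S}h(x).
\]
Identify $F_S$ with $\R^{n-|S|}_{\ge0}$ and apply \Cref{orthant_asymptotics} to the restriction $h|_{F_S}$, which is smooth and compactly supported. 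Each closed face sum then equals a polynomial $p_S(m)$ plus $o(1)$, with $p_{\emptyset}(m)=\sum_k a_km^k$. The open sum is therefore $\sum_S(-1)^{|S|}p_S(m)+o(1)$, and the claim reduces to the polynomial identity
\[
\sum_{S}(-1)^{|S|}p_S(m)=(-1)^n p_\emptyset(-m).
\]
Uniqueness of asymptotic polynomial expansions lets us identify coefficients.

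The main obstacle is this identity, which is genuinely a reciprocity statement, and I would prove it by a symmetry in the Berline--Vergne expansion. The key input is that the coefficients of the expansion in \cite[Theorem 5.5]{berline_local_2016} are obtained by applying an infinite-order differential operator (the Todd operator of the cone, or its local Euler--Maclaurin form) to $h$. For the standard orthant this factorizes coordinatewise: by the one-dimensional Euler--Maclaurin formula,
\[
\sum_{j\ge0}g(j/m)\sim m\int_0^\infty g+\tfrac12g(0)-\sum_{k\ge1}\frac{B_{2k}}{(2k)!}m^{1-2k}g^{(2k-1)}(0).
\]
The open version $\sum_{j\ge1}$ is the same expansion with $+\tfrac12$ replaced by $-\tfrac12$. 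This matches exactly the substitution $m\mapsto -m$ combined with an overall factor $-1$, because the integral term and the odd-derivative terms carry odd powers of $m$ while the boundary term carries $m^0$.

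In $n$ dimensions I would apply this iteratively, one coordinate at a time, using Fubini on the lattice sum and uniformity of the remainders for smooth compactly supported families. The closed expansion is then the product of the one-dimensional operators $\mathcal{T}_i(m)$ acting on $h$, evaluated on the corresponding faces. The open expansion is the product of the operators $-\mathcal{T}_i(-m)$, hence equals $(-1)^n$ times the closed expansion with $m$ replaced by $-m$. Truncating at order $m^0$ gives the claimed formula with an $o(1)$ error. If the uniformity of the iterated expansion is delicate, a fallback avoids it. Apply \Cref{rational_ehrhart}(4) to $h=$ a smooth cutoff equal to $1$ on a large cube $[0,N]^n$ intersected with the orthant, and extend by polarization and density to monomial-type test functions. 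I expect the direct Euler--Maclaurin route to be cleaner.

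One could instead bypass the inclusion--exclusion entirely and run the one-variable open/closed comparison directly for each coordinate. This yields the statement immediately, and I would present it that way, with inclusion--exclusion only as motivation.
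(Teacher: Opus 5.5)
Your final argument is correct, and it takes a genuinely different route from the paper. That argument is iterated one-variable Euler--Maclaurin, combined with the exact identity that the open one-variable operator at $m$ equals minus the closed operator at $-m$.

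\textbf{The paper's route.} After the same inclusion--exclusion over the faces $F_I$, the paper works on the Fourier side. It uses the generating functions $S(C)$ and the orthant case of Stanley reciprocity, $(-1)^nS(\R^n_{\ge0})(-\xi)=S(\R^n_{>0})(\xi)$. It then applies Berline--Vergne's Theorem~3.5, which writes the coefficient of $m^k$ as $\mathcal{F}^{-1}(T_k(C))\,h$, where $T_k(C)$ is a boundary value of the degree $-k$ homogeneous part of $S(C)$. Homogeneity of that part then gives the sign $(-1)^{n+k}$.

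\textbf{How the two compare.} Your one-variable symmetry is the operator-level counterpart of $\frac{e^{\xi}}{1-e^{\xi}}=-\frac{1}{1-e^{-\xi}}$. The product structure of the orthant plays the role that the factorization of $S(C)$ plays in the paper. Your route is elementary and self-contained: it avoids distributional boundary values and Fourier inversion, and it reproves \Cref{orthant_asymptotics} with explicit coefficients, so Berline--Vergne is not needed at all. The paper's route is shorter given the black box. It would also carry over verbatim to arbitrary pointed rational cones, where no product structure is available; here only $\R^n_{\ge0}$ is needed, so nothing is lost.

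\textbf{The uniformity step is routine.} Let $\Sigma_i$ be the closed lattice sum in $x_i$ and $\mathcal{T}_i$ its Euler--Maclaurin truncation at order $K$. Telescope
$\Sigma_1\cdots\Sigma_n-\mathcal{T}_1\cdots\mathcal{T}_n=\sum_i\Sigma_1\cdots\Sigma_{i-1}(\Sigma_i-\mathcal{T}_i)\mathcal{T}_{i+1}\cdots\mathcal{T}_n$.
In the $i$-th term, $\mathcal{T}_{i+1}\cdots\mathcal{T}_n h$ is $O(m^{n-i})$ with smooth compactly supported coefficients, the remainder in $x_i$ is $O(m^{1-2K})$ uniformly, and the outer sums run over $O(m^{i-1})$ points. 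So each term is $O(m^{n-2K})$, and $2K>n$ suffices. The open case has literally the same remainders, since $\Sigma_i^\circ-\mathcal{T}_i^\circ=\Sigma_i-\mathcal{T}_i$. Finally, taking the nonnegative-power part commutes with $m\mapsto -m$, which gives the claimed polynomial identity.

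\textbf{Drop the fallback.} A smooth cutoff is not the indicator of a polytope, so \Cref{rational_ehrhart}(4) says nothing about it, and ``polarization and density'' does not describe an actual argument. The direct coordinatewise route makes the fallback unnecessary, and it also makes the preliminary inclusion--exclusion reduction unnecessary, as you noted.
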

\begin{proof}
    For each $I\subseteq [n]$ the associated face of the standard orthant is
    \[
        F_I\coloneq \{x\in \R^n_{\ge 0} \mid x_i=0 \;\forall i\in I\}\,.
    \]
    In particular, $F_{\emptyset}=\R_{\ge 0}^n$. Since the boundary of the standard orthant is the union of its facets (i.e., $\partial (\R_{\ge 0}^n)=\bigcup_{i=1}^n{F_{\{i\}}}$) we have by inclusion-exclusion
    \begin{align}
        \label{indicator_inclusion_exclusion}
        \mathbbm{1}_{\R_{>0}^n}=\mathbbm{1}_{\R_{\ge 0}^n}-\sum_{\emptyset \neq I\subseteq [n]}{(-1)^{|I|+1}\mathbbm{1}_{F_I}}\,.
    \end{align}
    For $C=F_I$ or $C=\R_{>0}^n$, the exponential generating function \begin{align*}
        \sum_{x\in \Z^n\cap C} e^{\langle \xi,x\rangle}
    \end{align*}
    converges for every $\xi \in \C^n$ such that $\Re(\xi_i)< 0$ for every $1\le i\le n$. This function 
    has an analytic continuation to a meromorphic function $S(C)$ on $\C^n$ (see Berline and Vergne \cite[Section 2.3]{berline_local_2016}).
    In fact, for every $I\subseteq [n]$
    \begin{align*}
        &\sum_{x\in \Z^n\cap F_I}e^{\langle \xi,x\rangle}=\prod_{i\in [n]\setminus I}\frac{1}{1-e^{\xi_i}}=S(F_I)(\xi)
    \end{align*}
    and
    \begin{align*}
        &\sum_{x\in \Z^n\cap \R^n_{> 0}}e^{\langle \xi,x\rangle}=\prod_{i=1}^n\frac{e^{\xi_i}}{1-e^{\xi _i}}=S(\R_{>0}^n)(\xi)\,.
    \end{align*}
    Therefore by (\ref{indicator_inclusion_exclusion}),
    \begin{align*}
        S(\R_{>0}^n)(\xi)=S(\R_{\ge 0}^n)(\xi)-\sum_{\emptyset \neq I\subseteq [n]}{(-1)^{|I|+1}S({F_I})(\xi)}\,.
    \end{align*}
    Because
    \begin{align}
        \label{stanley_reciprocity}
        (-1)^nS(\R_{\ge0}^n)(-\xi)=(-1)^n\prod_{i=1}^n\frac{1}{1-e^{-\xi_i}}=\prod_{i=1}^n\frac{e^{\xi_i}}{1-e^{\xi_i}}=S(\R_{>0}^n)(\xi)\,,
    \end{align}
    we obtain
    \begin{align}
        \label{stanley_reciprocity_orthant}
        (-1)^nS(\R_{\ge 0}^n)(-\xi)=S(\R_{\ge 0}^n)(\xi)-\sum_{\emptyset \neq I\subseteq [n]}{(-1)^{|I|+1}S({F_I})(\xi)}
    \end{align}
    as meromorphic functions on $\C^n$.

    Applying \Cref{orthant_asymptotics} to each face $F_I$ and by (\ref{indicator_inclusion_exclusion}) there exist $b_0,\dots,b_n\in \R$ such that
    \begin{multline*}
        \sum_{x\in m^{-1}\Z^n\cap \R_{>0}^n}{h(x)}=\sum_{x\in m^{-1}\Z^n\cap \R_{\ge 0}^n}{h(x)}-\sum_{\emptyset \neq I \subseteq [n]}{(-1)^{|I|+1}\sum_{x\in m^{-1}\Z^n\cap F_I}{h(x)}}=\sum_{k=0}^n{b_km^k}+o(1)\,.
    \end{multline*}

    Let $\xi\in \C^n$ be such that $\xi_i\neq 0$ for all $1 \le i \le n$. Then the function $z\mapsto S(C)(z\xi)$ is meromorphic so it has Laurent series expansion in a small punctured disk centered at the origin. For every $k\in \Z$ the $(-k)$-th coefficient $S(C)_{[-k]}(\xi)$ of this Laurent series is a rational homogeneous function in $\xi$ of degree $-k$ (see Berline and Vergne \cite[Section 2,3]{berline_local_2016}).
    
    By Berline and Vergne \cite[Theorem 3.5]{berline_local_2016} the limits
    \[
        T_k(C)\coloneq \lim_{\epsilon\rightarrow 0,\epsilon >0}{S(C)_{[-k]}(-i\xi+\epsilon\lambda)}\,,
    \]
    where $\lambda=(-1,\dots,-1)$, exist as tempered distributions for all $C=F_I$, and the coefficients in the above asymptotic expansions are given by
    \begin{align*}
        a_k&=\mathcal{F}^{-1}(T_k(\R_{\ge 0}^n))\;h \\b_k&=\mathcal{F}^{-1}(T_k(\R_{\ge 0}^n))\;h -\sum_{\emptyset \neq I \subseteq [n]}{(-1)^{|I|+1}\mathcal{F}^{-1}(T_k(F_I))\;h}\\
        &\qquad=\mathcal{F}^{-1}(\left(T_k(\R_{\ge 0}^n)-\sum_{\emptyset \neq I\subseteq [n]}{(-1)^{|I|+1}T_k({F_I})}\right))\;h
    \end{align*}
    where $\mathcal{F}$ is the Fourier transform of tempered distributions.
    
    By (\ref{stanley_reciprocity_orthant}),
    \begin{align*}
        \left(S(\R_{\ge 0}^n)(\xi)-\sum_{\emptyset \neq I\subseteq [n]}{(-1)^{|I|+1}S({F_I})(\xi)}\right)_{[-k]}&=(-1)^nS(\R^n_{\ge 0})(-\xi)_{[-k]}
        \\&=(-1)^{n-k}S(\R^n_{\ge 0})(\xi)_{[-k]}\,.
    \end{align*}
    Consequently,
    \begin{align*}
        b_k&=\mathcal{F}^{-1}\left(T_k(\R_{\ge 0}^n)-\sum_{\emptyset \neq I\subseteq [n]}{(-1)^{|I|+1}T_k({F_I})}\right)\;h\\
        &=\mathcal{F}^{-1}\left((-1)^{n-k}\,T_k(\R_{\ge 0}^n)\right)\;h\\&=(-1)^{n-k}a_k=(-1)^{n+k}a_k\,.
    \end{align*}
    Hence,
    \begin{align*}
        \sum_{x\in m^{-1}\Z^n\cap \R_{>0}^n}{h(x)}=\sum_{k=0}^n{b_km^k}+o(1)=\sum_{k=0}^n{(-1)^{n+k}a_km^k}+o(1)=(-1)^n\sum_{k=0}^n{a_k(-m)^k}+o(1)\,.
    \end{align*}
\end{proof}
\begin{remark}
    A related formula was obtained in Agapito and Weitsman \cite[below Theorem 3]{agapito_weighted_2005}.
    Equation \eqref{stanley_reciprocity} is a special case of Stanley reciprocity for rational cones (see Beck and Robins \cite[Theorem 4.3]{sinai_robins}).
\end{remark}

\begin{lemma}
    \label{quasi_vanishes}
    Let $p \colon \Z \to \R$
    be a quasi-polynomial function such that $p(m)=o(1)$ as $m \to \infty$. Then $p\equiv0$.
\end{lemma}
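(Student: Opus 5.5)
We use the definition of a quasi-polynomial to reduce to finitely many genuine polynomials, one for each residue class modulo a common period. Write $p(m)=\sum_{k=0}^n a_k(m)m^k$ with each $a_k\colon\Z\to\R$ periodic. Let $d\in\N$ be a common multiple of the periods of $a_0,\dots,a_n$; then $d$ is a period of $p$ in the sense of \Cref{quasi-polynomial}. Indeed, for each residue $0\le r<d$ and all $j\in\Z$ we have $a_k(jd+r)=a_k(r)$, so
\[
  q_r(j)\coloneq p(jd+r)=\sum_{k=0}^n a_k(r)\,(jd+r)^k ,
\]
and $q_r$ is a polynomial in $j$ with real coefficients.

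Next we use the hypothesis. Fix $r$. As $j\to\infty$ we have $jd+r\to\infty$, so $p(m)=o(1)$ gives $q_r(j)\to 0$ as $j\to\infty$. A real polynomial that tends to $0$ along the positive integers must be identically zero. If $q_r$ had degree $e\ge 1$ with leading coefficient $c\neq 0$, then $|q_r(j)|\sim |c|j^e\to\infty$. If $q_r$ were a nonzero constant $c$, then $q_r(j)\to c\neq 0$. Either case contradicts $q_r(j)\to 0$. Hence $q_r\equiv 0$ as a polynomial, so $p(jd+r)=0$ for every $j\in\Z$, including negative $j$.

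Since every $m\in\Z$ can be written as $jd+r$ with $0\le r<d$, we conclude $p\equiv 0$ on all of $\Z$.

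The only point needing care is that the conclusion covers negative integers, even though the hypothesis only describes behaviour as $m\to+\infty$. This is handled by showing that each $q_r$ vanishes as a polynomial identity, rather than only at large arguments.
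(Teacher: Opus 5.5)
Your proposal is correct and follows essentially the same route as the paper's proof. The paper likewise passes to a common period (it takes the product $m^*$ of the periods of the $a_k$), observes that each $p(i+nm^*)$ is a polynomial in $n$ that is $o(1)$ and hence identically zero, and concludes $p\equiv 0$ on all of $\Z$. You add two details the paper leaves implicit: why a polynomial that tends to $0$ must vanish, and why the conclusion also holds at negative integers.
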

\begin{proof}
By definition, there exist $n_0\in \N$ and periodic functions $\{a_k \colon \Z \rightarrow \R\}_{k=0}^{n_0}$ such that for all $m \in \Z$
\[ p(m)=\sum_{k=0}^{n_0}{a_k(m)m^k}\,. \]
    Let $m^*=\prod_{k=0}^{n_0}{m_k}$ where $m_k\in \N$ is a period of $a_k$. Then for every $0 \le i < m^*$ the function $g_i(n)=p(i+nm^*)$ is polynomial in $n$.
    The assumption $p(m)=o(1)$ implies $g_i(n)=o(1)$, hence $g_i\equiv 0$ for every $0\le i< m^*$. Thus $p\equiv0$.
\end{proof}

\section{Integral-integral affine geometry}

\begin{definition}
    Let $M^n$ be a smooth manifold with corners. An \textbf{integral-integral affine atlas with corners} on $M$ is a collection of smooth charts with corners $\{\psi_i:U_i\rightarrow W_i\subseteq \R_{\ge 0}^n\}$ such that $\psi_j\circ\psi_i^{-1}$, when defined, locally agrees with an integral-integral affine map.
    An \textbf{integral-integral affine manifold with corners} is a smooth manifold with corners with a maximal (with respect to inclusion of integral-integral affine atlases with corners) integral-integral affine atlas with corners.
    A chart with corners from the maximal integral-integral affine atlas with corners is called an \textbf{integral chart}.
\end{definition}

\begin{remark}
    \label{measure_atlas}
    Let $M$ be an $n$-dimensional integral-integral affine manifold with corners. Since locally integral-integral affine maps preserve the standard Lebesgue measure on $\R^n$, there exists a unique {(Borel) measure} on $M$ such that each integral chart is measure preserving. The (affine) volume $\vol(M)$ is the measure of $M$ and integration on $M$ is done with respect to this measure.
\end{remark}

\begin{definition}
    Let $M$ be an $n$-dimensional integral-integral affine manifold with corners and let $k\in \N$. \textbf{The $k$-dilated manifold} $kM$ is M with a different maximal integral-integral affine atlas with corners. Let $\{\psi_i:U_i\rightarrow \R_{\ge 0}^n\}_{i\in I}$ be the maximal integral-integral affine atlas with corners of $M$ and let \begin{align*}
        &\phi_i:U_i\rightarrow \R_{\ge 0}^n\\
        &\phi_i(x)=k\cdot \psi_i(x)\,.
    \end{align*}
    The set $\{\phi_i\}_{i\in I}$ is a well-defined integral-integral affine atlas with corners since locally \begin{align*}
        &(\psi_i\circ \psi_j^{-1})(y)=w+Ay,\; w\in \Z^n,\; A\in GL_n(\Z)\\
        &(\phi_i\circ\phi_j^{-1})(y)=k\cdot\psi_i(\phi_j^{-1}(y))=k\cdot \psi_i(\psi_j^{-1}(\frac{1}{k}y))\\
        &=k(w+\frac{1}{k}Ay)=kw+Ay
    \end{align*}
    which is in $\AZGL{n}$ when $k\in \Z$.
    This atlas is contained in a unique maximal integral-integral affine atlas with corners $\mathcal{A}$. Then $kM\coloneq (M,\mathcal{A})$ is an integral-integral affine manifold with corners.
\end{definition}

\begin{proposition}
    \label{if_one_then_all}
      Let $M$ be an $n$-dimensional integral-integral affine manifold with corners Then
    \begin{multline*}
        M_{m^{-1}\Z} = \{x\in M \mid \text{ for every integral chart } \phi \colon U\rightarrow \Omega \subseteq \R^n_{\ge 0} \text{ whose domain contains $x$, } \\ \text{ we have } \phi(x)\in m^{-1}\Z^n\}.
    \end{multline*}
\end{proposition}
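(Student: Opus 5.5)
The inclusion $\supseteq$ is immediate. Every point of $M$ lies in the domain of some integral chart, so if $\phi(x)\in m^{-1}\Z^n$ for every integral chart $\phi$ containing $x$, then in particular some such chart exists and $x\in M_{m^{-1}\Z}$. The content of the statement is the reverse inclusion. Given $x\in M_{m^{-1}\Z}$, pick an integral chart $\psi\colon U\to\Omega$ with $x\in U$ and $\psi(x)\in m^{-1}\Z^n$, and let $\phi\colon V\to\Omega'$ be any other integral chart with $x\in V$. I will show that $\phi(x)\in m^{-1}\Z^n$.

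Both charts belong to the maximal integral-integral affine atlas with corners, so the transition map $\phi\circ\psi^{-1}$ is defined on $\psi(U\cap V)$, which is a relatively open subset of $\R^n_{\ge 0}$ containing $\psi(x)$. By definition of the atlas, this transition map agrees near $\psi(x)$ with some map $y\mapsto Ay+b$ with $A\in\GL(n,\Z)$ and $b\in\Z^n$. This local agreement near $\psi(x)$ is all that is needed, so connectedness of $U\cap V$ never enters. It then suffices to evaluate at $\psi(x)$:
\[
\phi(x)=A\psi(x)+b .
\]
Writing $\psi(x)=m^{-1}z$ with $z\in\Z^n$, we get $\phi(x)=m^{-1}(Az+mb)$. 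Since $A$ has integer entries, $Az\in\Z^n$, and $mb\in\Z^n$, so $\phi(x)\in m^{-1}\Z^n$ as required.

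The one step that needs care is the phrase ``locally agrees with an integral-integral affine map'' when $\psi(x)$ lies on the boundary of the orthant. In that case a neighbourhood of $\psi(x)$ in the domain of the transition map is only relatively open in $\R^n_{\ge 0}$, not open in $\R^n$. I read the definition as saying the transition map coincides with an element of $\AZGL{n}$ on such a relative neighbourhood. That neighbourhood contains the point $\psi(x)$ itself, so evaluating there is legitimate, and the argument is unaffected by the corner structure.
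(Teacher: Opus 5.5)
Your proposal is correct and follows essentially the same argument as the paper: take the chart $\psi$ with $\psi(x)\in m^{-1}\Z^n$, use that $\phi\circ\psi^{-1}$ agrees near $\psi(x)$ with an element of $\AZGL{n}$, and use the invariance of $m^{-1}\Z^n$ under $\AZGL{n}$. You also spell out two points the paper leaves implicit, namely the trivial inclusion $\supseteq$ and the explicit computation $A(m^{-1}z)+b=m^{-1}(Az+mb)$.
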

\begin{proof}
        Let $x \in M_{m^{-1}\Z}$.
So there exists an integral chart $\psi \colon U'\rightarrow \Omega' \subseteq \R^n_{\ge 0}$ 
whose domain contains $x$ and such that $\psi(x)\in m^{-1}\Z^n$.
Let $\phi \colon U \to \Omega \subseteq \R^n_{\ge 0}$
be another integral chart whose domain contains~$x$.
By definition
there exists a neighborhood of $x$
on which the composition
$\phi \circ \psi^{-1} \in \AZGL{n}$. Because $m^{-1}\Z^n$ is invariant under $\AZGL{n}$ we have $\phi(x)=\phi(\psi^{-1}(\psi(x)))=(\phi\circ\psi^{-1})(\psi(x)) \in m^{-1}\Z^n$.
\end{proof}

\begin{definition}
    Given an integral-integral affine manifold with corners $M$, the lattice-counting function of a subset  $A\subseteq M$ is \[
        L_{M,A}(m)\coloneq \#{(M_{m^{-1}\Z}\cap A)}\,.
    \]
    We omit $M$ when it is clear from context or when $A=M$.
\end{definition}

\begin{proposition}
    \label{integers_in_dilated_manifold}
    Let $M$ be an integral-integral affine manifold with corners. Then for every $k\in \N$
    \[
        M_{(km)^{-1}\Z}=(kM)_{m^{-1}\Z}\,.
    \]
\end{proposition}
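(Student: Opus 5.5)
The plan is to unwind the definitions directly, using \Cref{if_one_then_all} so that the choice of chart does not matter. The underlying sets of $M$ and $kM$ coincide, so we only need to compare the two membership conditions for a point $x$. Recall that $kM$ carries the maximal atlas $\mathcal{A}$ generated by the charts $\phi_i = k\cdot\psi_i$, where $\psi_i$ ranges over the maximal integral-integral affine atlas of $M$. Each $\phi_i$ is an integral chart of $kM$.

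First inclusion. Let $x\in M_{(km)^{-1}\Z}$ and pick any integral chart $\psi_i$ of $M$ whose domain contains $x$; such a chart exists because $M$ is covered by its atlas. By \Cref{if_one_then_all}, $\psi_i(x)\in (km)^{-1}\Z^n$. Hence $\phi_i(x)=k\psi_i(x)\in m^{-1}\Z^n$. Since $\phi_i$ is an integral chart of $kM$ containing $x$, this gives $x\in (kM)_{m^{-1}\Z}$.

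Reverse inclusion. Let $x\in (kM)_{m^{-1}\Z}$. Again pick an integral chart $\psi_i$ of $M$ around $x$. Then $\phi_i=k\psi_i$ is an integral chart of $kM$ around $x$, so \Cref{if_one_then_all}, applied to the manifold $kM$, gives $\phi_i(x)\in m^{-1}\Z^n$. Dividing by $k$ yields $\psi_i(x)\in (km)^{-1}\Z^n$, hence $x\in M_{(km)^{-1}\Z}$.

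The only subtle point is the reverse direction. The witness chart for $x\in (kM)_{m^{-1}\Z}$ might lie in $\mathcal{A}$ without being of the form $k\psi_i$. We avoid this by invoking \Cref{if_one_then_all} for $kM$: the condition then holds for every integral chart of $kM$ around $x$, in particular for one of the generating charts $k\psi_i$. With that, the argument consists only of the routine verifications above.
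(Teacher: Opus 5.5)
Your proof is correct and follows the paper's approach: the paper's proof is just ``This follows from the definitions,'' and your argument is that unwinding, carried out explicitly. Applying \Cref{if_one_then_all} to $kM$ in the reverse direction is the right step, because a witness chart in the maximal atlas $\mathcal{A}$ need not be $k$ times an integral chart of $M$.
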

\begin{proof}
    This follows from the definitions.
\end{proof}

\begin{corollary}
    \label{lattice_count_dilation}
    Let $M$ be an integral-integral affine manifold with corners. Then for every $k\in \N$ 
    \[
        L_{kM}(m)=L_M(km)\,.
    \]
\end{corollary}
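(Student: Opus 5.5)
The corollary follows at once from \Cref{integers_in_dilated_manifold}, with one preliminary remark. The plan is to unwind the definition of the lattice-counting function on both sides and compare the underlying sets.

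First, I would note that $kM$ and $M$ have the same underlying set, since the dilation changes only the maximal atlas. By definition,
\[
L_{kM}(m)=\#(kM)_{m^{-1}\Z}
\qquad\text{and}\qquad
L_M(km)=\#M_{(km)^{-1}\Z}.
\]
Both of these are cardinalities of subsets of that common set, with values in $\N\cup\{0,\infty\}$.

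Next, I would invoke \Cref{integers_in_dilated_manifold}, which states that these two subsets are literally equal. Equal sets have equal cardinalities, whether finite or infinite, so $L_{kM}(m)=L_M(km)$ for every $m\in\N$.

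There is no real obstacle. The only point worth checking is that \Cref{integers_in_dilated_manifold} is a genuine equality of sets and not merely a bijection. This is the case: a point $x$ with $\psi(x)\in (km)^{-1}\Z^n$ for an integral chart $\psi$ of $M$ satisfies $k\psi(x)\in m^{-1}\Z^n$. Here $k\psi$ is a chart of $kM$, and by \Cref{if_one_then_all} membership does not depend on which chart is used. The converse inclusion is the same argument, using that every chart of $kM$ locally differs from some $k\psi_i$ by an element of $\AZGL{n}$.
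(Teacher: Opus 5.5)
Your proposal is correct and follows the paper's argument: the corollary is obtained by taking cardinalities of both sides of the set equality $M_{(km)^{-1}\Z}=(kM)_{m^{-1}\Z}$ from \Cref{integers_in_dilated_manifold}. Your check of that set equality, using the charts $k\psi$ together with \Cref{if_one_then_all}, correctly fills in the step the paper dismisses as following from the definitions.
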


\begin{definition}
\label{affine_image_polytope}
Let $P \subseteq M$ be a subset
of an $n$-dimensional integral-integral affine manifold with corners $M$. 
Then $P$ is called a \textbf{rational polytope} if there exists an integral chart $\phi$ whose domain contains $P$ and such that 
$\phi(P)$ is a rational polytope in $\R^n$.
\end{definition}
\begin{proposition}
\label{polytope_affine_chart}
Let M be an $n$-dimensional integral-integral affine manifold with corners, $P\subseteq M$ a rational polytope and $\psi \colon W \rightarrow \Omega \subseteq \R_{\ge 0}^n$ an integral chart such that $P\subseteq W$. Then $\psi(P) \subseteq \R_{\ge 0}^n$ is a rational polytope and $\dim \psi(P)$ is independent of $\psi$. 
\end{proposition}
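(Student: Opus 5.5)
By \Cref{affine_image_polytope} there is an integral chart $\phi\colon U\to \Omega'\subseteq\R^n_{\ge0}$ whose domain contains $P$ and for which $Q\coloneq\phi(P)$ is a rational polytope. The plan is to compare $\psi(P)$ with $Q$ through the transition map $\tau\coloneq\psi\circ\phi^{-1}$, which is defined on the open set $\phi(U\cap W)\supseteq Q$. The difficulty is that $\tau$ is only \emph{locally} integral-integral affine, so the first task is to show that on $Q$ it coincides with a single element $T\in\AZGL{n}$. Once that is known, $\psi(P)=\tau(Q)=T(Q)$. An affine map $T(x)=Ax+b$ with $A\in\GL(n,\Z)$ and $b\in\Z^n$ satisfies $T(\conv\{v_1,\dots,v_k\})=\conv\{Tv_1,\dots,Tv_k\}$ and maps $\Q^n$ to $\Q^n$. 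Hence $\psi(P)$ is a rational polytope, and it lies in $\R^n_{\ge0}$ because $\psi$ takes values there. Since $T$ is an affine bijection of $\R^n$, it carries the affine span of $Q$ onto the affine span of $T(Q)$, so $\dim\psi(P)=\dim Q$. Applying the same argument to two arbitrary charts $\psi,\psi'$ shows that both dimensions equal $\dim\phi(P)$, which gives the independence claim.

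The main obstacle is producing the single affine map $T$. The set $Q$ is convex, hence connected. The first attempt is to cover $Q$ by small convex neighbourhoods $B_x$ of its points in $\R^n$. Each $B_x$ is intersected with the domain $\phi(U\cap W)$, which is relatively open in $\R^n_{\ge0}$, and is chosen so that $\tau$ agrees with some $T_x\in\AZGL{n}$ on $B_x\cap\phi(U\cap W)$. If two such sets overlap in a set with nonempty interior, then two affine maps agreeing on an open set must be equal, so $T_x=T_y$. The affine map is therefore locally constant as a function of $x$ and hence constant on the connected set $Q$.

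There is a subtlety here. Near corner points, the sets $B_x\cap\R^n_{\ge0}$ are still convex with nonempty interior, so overlaps of neighbourhoods of nearby points do contain open subsets of $\R^n$, namely points of $\R^n_{>0}$. This requires the neighbourhoods to be chosen relatively open in $\R^n_{\ge0}$, so that they are full-dimensional. I would make this precise by a Lebesgue-number argument: compactness of $Q$ gives a $\delta>0$ such that each ball of radius $\delta$ around a point of $Q$, intersected with $\R^n_{\ge0}$, lies in a single local-agreement set. Walking along a segment from a fixed $x_0\in Q$ to an arbitrary $x\in Q$ in steps shorter than $\delta/2$ then shows $T_x=T_{x_0}$.

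A second subtlety is that a different representation of $\tau$ as "locally integral-integral affine" might be given only as agreement with an element of $\AZGL{n}$ near each point. The uniqueness of an affine extension from a set with nonempty interior handles this. Every point of $\R^n_{\ge0}$ is a limit of interior points, and any relatively open neighbourhood in $\R^n_{\ge0}$ contains an open subset of $\R^n$. So there is no ambiguity, and the argument goes through.
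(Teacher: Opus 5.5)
Your proposal is correct and follows essentially the same route as the paper. Both arguments show that the transition map $\psi\circ\phi^{-1}$ agrees with a single element of $\AZGL{n}$ on a connected set containing $\phi(P)$: the paper uses the connected component of $\phi(U\cap W)$ that contains $\phi(P)$, and you use $\phi(P)$ itself. Both then use that such a map sends rational polytopes to rational polytopes and preserves dimension. The only difference is that you spell out the local-constancy argument near corners and prove the convex-hull step directly, whereas the paper asserts the first and cites Elisha--Karshon--Loizides for the second.
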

\begin{proof}
    By definition, there is an integral chart $\phi \colon U \rightarrow \Omega'\subseteq  \R_{\ge 0}^n$ such that $P \subseteq U$ and $\phi(P)$ is a rational polytope.
In particular, $\phi(P)$ is connected.
Let $\calO$ be the connected component of $\phi(U \cap W)$
that contains $\phi(P)$.
On $\calO$ the map $\psi \circ \phi^{-1}$ is the restriction of some $f\in \AZGL{n}$. 
By Elisha, Karshon and Loizides \cite[Proposition 2.15]
{elisha2026integralpointsvolumeintegralintegral}
$\psi(P) = f(\phi(P))$ is a rational polytope. Since $f$ is an invertible affine map, dimensions of polytopes are preserved. That is, $\dim \psi(P)=\dim \phi(P)$.
\end{proof}

\begin{definition}
    Let $M$ be an integral-integral affine manifold with corners and $P\subseteq M$ a rational polytope. Its boundary is
    \[
        \partial P \coloneq \partial_M P \cup (P\cap \partial M)
    \]
    and its interior is \[
        \Int{P}\coloneq \Int_MP\setminus \partial M
    \]
    where $\partial_MP$ and $\Int_MP$ are the topological boundary and topological interior of $P$ respectively in $M$. 
\end{definition}

\begin{proposition}
    \label{integral_chart_operator_commute}
    Let $M$ be an integral-integral affine manifold with corners and let $P\subseteq M$ be a rational polytope. Then for every integral chart $\psi:U\rightarrow \R_{\ge 0}^n$ defined on a neighborhood of $P$ one has 
   \begin{align*}
       &\Int(\psi (P)) = \psi(\Int P) \\
        &\partial(\psi(P))=\psi(\partial P) \,,
   \end{align*}
   where the topological interior and boundary on the left are with respect to $\R^n$.
\end{proposition}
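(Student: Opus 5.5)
The plan is to compare the three notions of interior and boundary involved. These are: topological notions in $M$, topological notions in $\R^n$, and the chart image, which is an open subset $\Omega=\psi(U)$ of $\R^n_{\ge 0}$. Since $\partial(\psi(P))=\psi(P)\setminus\Int(\psi(P))$ for the compact set $\psi(P)$, and likewise $\partial P = P\setminus \Int P$, the boundary identity follows from the interior identity. So I will prove only $\Int(\psi(P))=\psi(\Int P)$.

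First I check the claim $\partial P = P\setminus\Int P$. The set $P$ is compact, hence closed in $M$, so $\partial_M P = P\setminus \Int_M P$. Therefore
\[
P\setminus \Int P = (P\setminus \Int_M P)\cup (P\cap\partial M)=\partial_M P\cup (P\cap \partial M)=\partial P .
\]

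Next, $\psi\colon U\to\Omega$ is a homeomorphism onto $\Omega$, which is open in $\R^n_{\ge 0}$. Since $U$ is an open neighbourhood of $P$, the interior of $P$ in $M$ equals its interior in $U$, and $\psi$ carries this to the interior of $\psi(P)$ relative to $\Omega$. As $\Omega$ is open in $\R^n_{\ge0}$, that is the interior of $\psi(P)$ relative to $\R^n_{\ge 0}$. Also, $\psi$ maps $\partial M\cap U$ onto $\Omega\cap\partial\R^n_{\ge0}=\Omega\setminus\R^n_{>0}$, since corner charts preserve the stratification (boundary points of a manifold with corners are exactly those sent to the boundary of the orthant; this is standard invariance of the boundary). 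Hence
\[
\psi(\Int P)=\Int_{\R^n_{\ge0}}(\psi(P))\cap \R^n_{>0}.
\]

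It remains to show that for any $A\subseteq\R^n_{\ge0}$ we have $\Int_{\R^n_{\ge0}}(A)\cap\R^n_{>0}=\Int_{\R^n}(A)$. For $\supseteq$: an open ball in $\R^n$ contained in $A\subseteq \R^n_{\ge 0}$ cannot meet the hyperplanes $x_i=0$ at its centre, so the centre lies in $\R^n_{>0}$, and the ball is also relatively open in $\R^n_{\ge0}$. For $\subseteq$: if $x\in\R^n_{>0}$ and $B\cap\R^n_{\ge0}\subseteq A$ for a ball $B$ around $x$, then shrinking $B$ into $\R^n_{>0}$ gives $B\subseteq A$.

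The only delicate point is the invariance of boundary under corner charts, i.e. that $x\in\partial M$ exactly when $\psi(x)\in\partial\R^n_{\ge0}$ for every chart. I will take this as part of the standard theory of manifolds with corners, or as the definition of $\partial M$. It is anyway automatic here, because transition maps are locally affine: an affine homeomorphism between open subsets of the orthant cannot send a boundary point to an interior point, by invariance of domain.
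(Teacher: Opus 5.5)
Your proof is correct and follows the paper's route. Both identify $\psi(\Int_M P)$ with the interior of $\psi(P)$ relative to $\R^n_{\ge 0}$ and $\psi(\partial M\cap U)$ with $\Omega\setminus\R^n_{>0}$, intersect with the open orthant to obtain $\Int\psi(P)$, and get the boundary identity by taking complements in the closed set $\psi(P)$. You also spell out steps the paper leaves implicit: that $\partial P = P\setminus \Int P$ because $P$ is closed, the chart-invariance of $\partial M$, and the lemma $\Int_{\R^n_{\ge 0}}(A)\cap\R^n_{>0}=\Int(A)$ for $A\subseteq\R^n_{\ge 0}$.
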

\begin{proof}
        Since $\psi(P)\subseteq \R^n_{\ge 0}$ we have
    \begin{align*}
        \psi(\Int P)&=\psi(\Int_MP\setminus \partial M)\\&=\psi((\Int_M P) \cap(\Int M\cap U))
        \\&=\Int_{\R_{\ge 0}^n}\psi(P)\cap \Int_{\R_{\ge 0}^n}{(\R_{>0}^n \cap \Omega)} \\
        &=\Int_{\R_{\ge 0}^n}{(\psi(P)\cap \R_{> 0}^n)}=\Int\psi(P)\,.
    \end{align*}
    Consequently,
    \begin{align*}
        \psi(\partial P)=\psi(P\setminus \Int P)=\psi(P)\setminus \psi(\Int P)=\psi(P)\setminus\Int\psi(P)=\partial (\psi(P))\,.
    \end{align*}
\end{proof}

\begin{proposition}
    \label{affine_polytope_reciprocity}
    Let $M$ be $n$-dimensional integral-integral affine manifold with corners and $P\subseteq M$ be an $n$-dimensional rational polytope. Then 
    \[
        L_{\Int{P}}(m)=(-1)^nL_P(-m)\,.
    \]
\end{proposition}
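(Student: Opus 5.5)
Fix an integral chart $\psi\colon U\to\Omega\subseteq\R^n_{\ge 0}$ whose domain contains $P$; such a chart exists by \Cref{affine_image_polytope}. The plan is to move everything into $\R^n$ through $\psi$ and then apply the rational Ehrhart--Macdonald reciprocity, \Cref{rational_ehrhart}(4). By \Cref{polytope_affine_chart}, $Q\coloneq\psi(P)$ is a rational polytope in $\R^n$. Its dimension is $n$, since $\dim P$ is defined through any chart and is chart-independent. Because $Q$ is full-dimensional, its relative interior equals its topological interior in $\R^n$: $\RInt Q=\Int Q$.

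Next I compare lattice points. By \Cref{if_one_then_all}, a point $x\in P$ lies in $M_{m^{-1}\Z}$ if and only if $\psi(x)\in m^{-1}\Z^n$. The forward implication holds because the condition holds in every chart. For the converse, the existence of the single chart $\psi$ is exactly the defining condition. Since $\psi$ is injective, it gives bijections
\[
M_{m^{-1}\Z}\cap P\;\longleftrightarrow\; m^{-1}\Z^n\cap Q,\qquad M_{m^{-1}\Z}\cap \Int P\;\longleftrightarrow\; m^{-1}\Z^n\cap \psi(\Int P).
\]
By \Cref{integral_chart_operator_commute}, $\psi(\Int P)=\Int Q=\RInt Q$. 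Hence $L_P(m)=L_Q(m)$ and $L_{\Int P}(m)=L_{\RInt Q}(m)$ for all $m\in\N$.

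Finally, \Cref{rational_ehrhart}(1) says $L_Q$ agrees on $\N$ with a quasi-polynomial. This is what gives $L_P(-m)$ a meaning: it is that quasi-polynomial evaluated at $-m$. The extension is unique, because two quasi-polynomials agreeing on $\N$ agree everywhere; their difference vanishes on $\N$, so by the argument of \Cref{quasi_vanishes} each of its polynomial pieces vanishes identically. Then \Cref{rational_ehrhart}(4) gives $L_{\Int P}(m)=L_{\RInt Q}(m)=(-1)^{n}L_Q(-m)=(-1)^nL_P(-m)$.

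The only delicate point is that the paper's notion of $\Int P$ removes $P\cap\partial M$ as well as the topological boundary of $P$ in $M$. One therefore needs its image to coincide with the Euclidean interior of $Q$, and not with the interior of $Q$ relative to $\R^n_{\ge0}$. This is exactly what \Cref{integral_chart_operator_commute} provides, so no further obstacle is expected.
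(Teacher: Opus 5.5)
Your proof is correct and follows the paper's argument: push $P$ forward by an integral chart $\psi$, use full-dimensionality to identify $\RInt\psi(P)$ with $\Int\psi(P)$, transfer lattice points via \Cref{if_one_then_all} and interiors via \Cref{integral_chart_operator_commute}, and apply \Cref{rational_ehrhart}(4). Your extra remark on why $L_P(-m)$ is well defined (uniqueness of the quasi-polynomial extension from $\N$ to $\Z$) is a welcome clarification that the paper leaves implicit.
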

\begin{proof}
    Let $\psi:U\rightarrow \R_{\ge 0}^n$ be an integral chart where $P\subseteq U$. Then $\psi(P)$ is an $n$-dimensional rational polytope hence $\RInt \psi(P)=\Int \psi(P)$. By
    \Cref{rational_ehrhart},
    \begin{align}
        L_{\Int \psi(P)}(m)=L_{\RInt\psi(P)}(m)=(-1)^nL_{\psi(P)}(-m)\,.
    \end{align}
    Thus by \Cref{integral_chart_operator_commute}, and \Cref{if_one_then_all},
    \begin{align*}
        L_{\Int P}(m)=L_{\Int\psi(P)}(m)=(-1)^nL_{\psi(P)}(-m)=(-1)^nL_{P}(-m)\,.
    \end{align*}
\end{proof}

\begin{lemma}
    \label{intersection_lemma}
    Let $M$ be an $n$-dimensional integral-integral affine manifold with corners and $P_1,P_2 \subseteq M$ rational polytopes, and let $\phi_1:U_1\rightarrow \Omega_1\subseteq \R_{\ge 0}^n$ and $\phi_2:U_2 \rightarrow \Omega_2 \subseteq \R_{\ge 0}^n$ be integral charts such that $P_1\subseteq U_1$ and $P_2\subseteq U_2$. If $U_1\cap U_2$ is connected, then $P_1 \cap P_2$ is a rational polytope or is empty.
\end{lemma}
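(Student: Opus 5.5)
The plan is to push $P_2$ into the chart $\phi_1$ and then intersect two rational polytopes inside a single chart. We may assume $P_1\cap P_2\neq\emptyset$, since otherwise there is nothing to prove.

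Since $U_1\cap U_2$ is connected, the transition map $\phi_2\circ\phi_1^{-1}\colon \phi_1(U_1\cap U_2)\to\phi_2(U_1\cap U_2)$ is locally integral-integral affine on a connected set. Hence it agrees with a single $f\in\AZGL{n}$: the set where it agrees with a given $f$ is open, and since affine maps agreeing on an open set coincide, these sets partition the connected domain. Put $Q_1=\phi_1(P_1)$ and $Q_2=\phi_2(P_2)$; these are rational polytopes by \Cref{polytope_affine_chart}. The set $f^{-1}(Q_2)$ is also a rational polytope, because $f^{-1}$ has rational (indeed integral) coefficients and maps $\Q^n$ to $\Q^n$, and the image of a convex hull is the convex hull of the images of the vertices.

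The key identity to establish is
\[
\phi_1(P_1\cap P_2)=Q_1\cap f^{-1}(Q_2).
\]
The inclusion $\subseteq$ is clear: if $x\in P_1\cap P_2$ then $x\in U_1\cap U_2$, so $\phi_2(x)=f(\phi_1(x))$.

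The reverse inclusion is the main obstacle. A point $y\in Q_1\cap f^{-1}(Q_2)$ has $y=\phi_1(x)$ with $x\in P_1$ and $f(y)\in Q_2$, but we do not know a priori that $x\in U_2$. Points of $P_1\setminus U_2$ could map under $f$ into $Q_2$ through a different sheet, in the way that non-simply-connected gluings behave. I would handle this with a convexity and connectedness argument. Fix a point $x_0\in P_1\cap P_2$ and set $y_0=\phi_1(x_0)$. The segment $[y_0,y]$ lies in $Q_1\cap f^{-1}(Q_2)$ by convexity. Let
\[
T=\{t\in[0,1] : \phi_1^{-1}(y_0+t(y-y_0))\in P_2\}.
\]
Then $T$ is closed, because $P_2$ is compact. $T$ is also open: near a point $z=\phi_1^{-1}(y_t)\in P_1\cap P_2\subseteq U_1\cap U_2$ the chart relation holds, so the nearby points are $\phi_2^{-1}(f(y_s))\in\phi_2^{-1}(Q_2)=P_2$, using that $\phi_2$ is injective and that $U_1\cap U_2$ is open. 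Since $0\in T$, connectedness gives $T=[0,1]$, and hence $x\in P_2$.

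Therefore $\phi_1(P_1\cap P_2)$ is the intersection of two rational polytopes. That intersection is a bounded rational polyhedron, described by finitely many rational inequalities, so its vertices are rational and it is a rational polytope, or it is empty. Pulling back along $\phi_1$, \Cref{affine_image_polytope} shows that $P_1\cap P_2$ is a rational polytope.
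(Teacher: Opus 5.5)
Your proof is correct and is essentially the paper's argument. The paper writes $\phi_1(P_1\cap P_2)=Q\cap\phi_1(U_1\cap U_2)$, where $Q=\phi_1(P_1)\cap f^{-1}(\phi_2(P_2))$ is your $Q_1\cap f^{-1}(Q_2)$, and concludes that this set is compact and relatively open in the connected convex set $Q$, hence is empty or all of $Q$; your clopen argument on $[0,1]$ along segments from $y_0$ is the path-connected version of that same step.
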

\begin{proof}
    By \Cref{polytope_affine_chart} the sets $\phi_1(P_1), \phi_2(P_2)$ are rational polytopes. 
    The intersection $U_1\cap U_2$ is connected so the composition $\phi_1\circ\phi_2^{-1}$ has the form $x \mapsto Ax +b$ where $b\in \Z^n$ and $A\in \mathsf{GL}_n(\Z)$. Then
    \begin{align*}
        &\phi_1(P_1 \cap P_2) = \phi_1(P_1 \cap P_2 \cap U_1 \cap U_2) = \phi_1(P_1)\cap\phi_1(P_2\cap U_1 \cap U_2).
    \end{align*}
    Moreover,
    \begin{multline*}
        \phi_1(P_2\cap U_1 \cap U_2)=(\phi_1\circ\phi_2^{-1})(\phi_2(P_2\cap U_1 \cap U_2))=b+A(\phi_2(P_2)\cap \phi_2(U_1\cap U_2)) \\
        =b+\left(A(\phi_2(P_2))\cap A(\phi_2(U_1\cap U_2))\right) =(b+A\phi_2(P_2))\cap(b+A\phi_2(U_1\cap U_2)).
    \end{multline*}
    Therefore,
    \begin{align*}
         \phi_1(P_1 \cap P_2)&=\phi_1(P_1)\cap (b+A\phi_2(P_2))\cap (b+A\phi_2(U_1\cap U_2))\\
         &=Q\cap (b+A\phi_2(U_1\cap U_2))
    \end{align*}
    where \[
        Q\coloneq\phi_1(P_1)\cap (b+A\phi_2(P_2)) \,.
    \]
    Both $P_1,P_2$ are compact subsets of $M$, hence $\phi_1(P_1\cap P_2)$ is compact. The set 
    $b+A\phi_2(U_1\cap U_2)$ is open in $\Omega_1$ and $Q\subseteq \Omega_1$ is a rational polytope \cite[Propositions 2.15, 2.18]
{elisha2026integralpointsvolumeintegralintegral}. So $\phi_1(P_1\cap P_2)$ is both an open and a closed subset of $Q$. Because $Q$ is (convex, hence) connected, $\phi_1(P_1\cap P_2)\in \{\emptyset, Q\}$. 
\end{proof}

\begin{lemma}{}
    \label{finite_rational_intersection}
    Let M be an $n$-dimensional integral-integral affine manifold with corners, let $\{P_i\}_{i\in I}$ be a finite collection of rational polytopes in $M$,
    and let $\{\phi_i \colon U_i\rightarrow \Omega_i \subseteq \R_{\ge 0}^n\}_{i\in I}$ be integral charts such that $P_i\subseteq U_i$ for every $i \in I$. Suppose that, for every subset $I' \subseteq I$, the intersection $\bigcap_{i \in I'} U_i$ is (empty or) connected. Then $\bigcap_{i \in I}{P_i}$, (if nonempty,) is a rational polytope.
\end{lemma}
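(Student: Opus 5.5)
We argue by induction on $|I|$, with \Cref{intersection_lemma} as the inductive step. Assume $\bigcap_{i\in I}P_i\neq\emptyset$. If $|I|=1$ there is nothing to prove. Otherwise, fix $j\in I$, set $I'=I\setminus\{j\}$, and put $Q\coloneq\bigcap_{i\in I'}P_i$ and $V\coloneq\bigcap_{i\in I'}U_i$. Note that $Q\supseteq\bigcap_{i\in I}P_i$, so $Q$ is nonempty. Every subfamily of $\{U_i\}_{i\in I'}$ is a subfamily of $\{U_i\}_{i\in I}$, so the hypothesis passes to $I'$. By induction, $Q$ is therefore a rational polytope.

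The difficulty is that \Cref{intersection_lemma} requires a single integral chart whose domain contains $Q$. We take $V$, restricting any chart $\phi_{i_0}$ with $i_0\in I'$. Since $Q\subseteq P_i\subseteq U_i$ for all $i\in I'$, we have $Q\subseteq V$. The set $V$ is nonempty, hence connected by hypothesis, and open. The restriction $\phi_{i_0}|_V\colon V\to\phi_{i_0}(V)\subseteq\R^n_{\ge0}$ is a restriction of an integral chart to an open set. By maximality of the atlas it is again an integral chart (its transitions with other charts are still locally integral-integral affine). So $Q$ is a rational polytope sitting inside the domain $V$ of an integral chart, and $P_j$ sits inside the domain $U_j$ of $\phi_j$.

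Now apply \Cref{intersection_lemma} to $P_1=Q$ with chart $\phi_{i_0}|_V$ and $P_2=P_j$ with chart $\phi_j$. The intersection of the domains is $V\cap U_j=\bigcap_{i\in I}U_i$. This contains $\bigcap_{i\in I}P_i\neq\emptyset$, so it is nonempty and, by hypothesis, connected. The lemma then gives that $Q\cap P_j=\bigcap_{i\in I}P_i$ is empty or a rational polytope, and since it is nonempty it is a rational polytope.

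The main point to check is that the rational polytope $Q$ produced by induction is admissible input for \Cref{intersection_lemma}. Its definition, \Cref{affine_image_polytope}, only requires some chart containing $Q$ with rational polytope image, and \Cref{polytope_affine_chart} lets us use the restricted chart $\phi_{i_0}|_V$. So the only genuinely new ingredient is that the domains $V$ and $U_j$ have connected intersection, which is exactly what the hypothesis on all subfamilies provides.
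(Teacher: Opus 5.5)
Your proof is correct and follows the paper's argument, which says only that the lemma follows by repeated application of \Cref{intersection_lemma}; your induction on $|I|$ spells out that iteration, including the choice of a chart whose domain contains the partial intersection $Q$. Restricting to $V$ is harmless but unnecessary: $Q\subseteq P_{i_0}\subseteq U_{i_0}$ and $U_{i_0}\cap U_j$ is connected by hypothesis, so $\phi_{i_0}$ itself can be used, and the connectedness of $V$ is never actually needed.
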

\begin{proof}
    This follows by repeated application of \Cref{intersection_lemma}.
(When $I = \{1,\ldots,l\}$, it is enough
to assume that $U_1 \cap \ldots \cap U_{l'}$ is connected
for all $l' \in \{1,\ldots,l\}$.)
\end{proof}

\begin{definition}
    \label{good_cover}
    Let $M$ be an $n$-dimensional smooth manifold with corners. An open cover $\mathcal{U}\coloneq\{U_i\}_{i\in I}$ of $M$ is called a \textbf{good cover} if every nonempty finite intersection $\bigcap_{j=0}^m{U_j}$ of sets $U_j$ from $\mathcal{U}$ is contractible.
\end{definition}

\begin{theorem}
\label{manifold_corners_good_cover_corollary}
    Let $M$ be an $n$-dimensional smooth manifold with corners and let $\mathcal{U}\coloneq \{U_i\}_{i \in I}$ be an open cover of $M$. Then there exists a good cover of $M$ subordinate to $\mathcal{U}$. 
\end{theorem}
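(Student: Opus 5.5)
The plan is the standard geodesically convex argument, adapted to corners by working in local models $\R^k_{\ge 0}\times\R^{n-k}$. First I choose an auxiliary Riemannian metric $g$ on $M$ that is \emph{adapted to the corner structure}. That is, near each point of codimension-$k$ corner stratum, in some chart with corners $\psi\colon V\to \R^k_{\ge0}\times\R^{n-k}$, the metric is the restriction of a metric on an open subset of $\R^n$ for which every face $\{x_{i}=0,\ i\in J\}$ is totally geodesic. The simplest choice is a metric that is Euclidean in the chart, glued by a partition of unity. Gluing destroys the total-geodesic property, however, so instead I would use the standard doubling/reflection trick. Take local metrics invariant under the reflections $x_i\mapsto -x_i$ ($i\le k$), so that the fixed faces are totally geodesic, and arrange compatibility by constructing the metric inductively over strata in order of increasing codimension, as in the collar constructions for manifolds with corners (each face gets an invariant collar, and the metric is a product near the face).

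With such a metric, each face of $M$ is totally geodesic. The key local claim is then the following. Every point $x\in M$ has a number $r(x)>0$ such that, for every $r<r(x)$, the geodesic ball $B(y,r)\cap M$ is \emph{strongly convex in $M$} for all $y$ near $x$: any two of its points are joined by a unique minimizing geodesic of $M$, the geodesic stays inside the ball, and it varies continuously with the endpoints. In the local reflected model, $M$ near $x$ is the fundamental domain $\R^k_{\ge0}\times\R^{n-k}$ of a finite reflection group acting isometrically on a Riemannian open set $\tilde V\subseteq\R^n$. The intersection of a strongly convex ball in $\tilde V$ (Whitehead's theorem) with the fundamental domain is an intersection of convex sets, since each half-space $\{x_i\ge0\}$ is bounded by a totally geodesic hypersurface and is therefore geodesically convex. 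Hence it is convex, and it is contractible by geodesic retraction to a point.

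Now refine $\mathcal U$. For each $x$ pick $r_x$ small enough that $B(x,r_x)\cap M$ is convex as above and contained in some $U_i$. The resulting collection is an open cover of $M$ subordinate to $\mathcal U$. Any nonempty finite intersection of convex sets is convex, and a nonempty convex set $C$ is contractible via $H(p,t)=\gamma_{p,c}(t)$ for a fixed $c\in C$, where $\gamma_{p,c}$ is the unique minimizing geodesic. This $H$ is continuous by continuous dependence of geodesics on endpoints. So the cover is good.

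The main obstacle is the first step: constructing a metric for which all boundary faces are simultaneously totally geodesic near corners, globally and compatibly. A naive partition-of-unity gluing breaks this property. I would handle it with reflection-invariant collars built inductively over the corner strata, or else cite the known result that manifolds with corners admit such metrics. Alternatively, one could bypass metrics and embed $M$ into a manifold without boundary (its double), but that works cleanly only for manifolds with faces, not arbitrary corners. So the metric route is the one I would commit to.
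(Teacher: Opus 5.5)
Granting a metric $g$ on $M$ such that every point of depth $k$ has a chart in which $g$ is the restriction of a $(\Z/2)^k$-invariant metric on an open subset of $\R^n$, the rest of your argument is essentially right. The reflection trick makes small balls $B(y,r)\cap M$ convex with respect to unique $d_M$-minimizers, finite intersections stay convex, and geodesic retraction contracts them.

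The gap is that this metric is never constructed. You yourself call it the main obstacle, and it carries all of the corner-specific content of the theorem. It is not a partition-of-unity matter. Transition maps of a corner atlas need not extend equivariantly to the local doubles: already $t\mapsto t+t^2$ on $\R_{\ge 0}$ extends only to $t+t|t|$, which is not $C^2$. So you need a sub-atlas with reflection-equivariant transitions. Equivalently, you need a system of compatible collars in which, at every corner, the collar flows of the $k$ local boundary hypersurfaces commute and each preserves the other local faces.

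Your sketch does not produce this, for two reasons.
\begin{enumerate}
\item Inducting ``in order of increasing codimension'' runs the wrong way for an extension argument. The set of points of depth $\le k$ is open, and collars chosen near codimension-one points generally do not extend compatibly into the corners. The structure has to be fixed near the deepest (closed) strata first and then extended outward.
\item ``Each face gets an invariant collar'' presupposes embedded faces, which the statement does not assume. In a teardrop the single boundary hypersurface meets itself at the corner, so collars must be formulated on local sheets and made mutually compatible.
\end{enumerate}
Without such a metric nothing makes the half-spaces $\{x_i\ge 0\}$ geodesically convex, and for a general metric your convexity step fails.

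The paper sidesteps this entirely, using the fact that being a good cover is a purely topological condition.
\begin{enumerate}
\item It replaces $M$ by a smooth manifold with boundary $\bar M$ homeomorphic to $M$ (Douady--H\'erault rounding of corners) and pulls $\mathcal U$ back.
\item It applies the geodesic-convexity argument for manifolds with boundary (Boavida--Weiss). There an ordinary collar and a metric that is a product near $\partial \bar M$ suffice; this is exactly your argument in the case $k=1$.
\item It pushes the resulting good cover forward along the homeomorphism.
\end{enumerate}
To complete your route you would need either a genuine construction, valid for arbitrary manifolds with corners, of the compatible collars described above, or a precise reference for it. The rounding reduction is the missing idea that makes this unnecessary.
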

\begin{proof}
    By A. Douady and L. Hérault \cite[Theorem 6.1]{borel_corners_1973} there exists a smooth manifold with boundary $\bar M$ and a homeomorphism $f: \bar M \rightarrow M$.
    Let $\bar{\mathcal{U}}\coloneq \{f^{-1}(U_i)\}_{i\in I}$.
    By the metric argument in the proof of Boavida and Weiss \cite[Proposition 9.1]{boavida_de_brito_manifold_2013} (case where $k=1$), choosing the geodesically convex neighborhoods sufficiently small to refine \(\bar{\mathcal U}\),  there exists a good cover $\{\bar {V_j}\}_{j\in J}$ subordinate to $\bar{\mathcal{U}}$ hence $\{f(\bar{V_{j}})\}_{j\in J}$ is a good cover subordinate to $\mathcal{U}$.
\end{proof}
\begin{remark}
    This theorem is in principle known to experts (for example it is used in Volpe \cite[Proposition 2.19]{volpe_verdier_2025}), however the author did not find a detailed proof.
\end{remark}

\begin{corollary}{}
    \label{rational_polytope_cover}
    Let $M$ be a compact $n$-dimensional integral-integral affine manifold with corners. Then there exists a finite collection $\{P_i\}_{i=1}^k$ of $n$-dimensional rational polytopes such that: 
    \begin{enumerate}
        \item $M=\bigcup_{i=1}^k{P_i}$
        \item $\Int{M}=\bigcup_{i=1}^k{\Int{P_i}}$
        \item For every $\emptyset\neq I\subseteq \{1,\dots,k\}$, the intersection $\cap_{i\in I}{P_i}$ is a rational polytope or empty.
    \end{enumerate}
\end{corollary}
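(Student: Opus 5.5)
We combine \Cref{manifold_corners_good_cover_corollary} with \Cref{finite_rational_intersection}. The plan is to cover $M$ by integral charts, refine to a good cover, and then place inside each member of the refinement an $n$-dimensional rational polytope. The interiors of these polytopes should still cover $\Int M$, and $M$ itself should be covered by the polytopes.

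First, take the open cover of $M$ by domains of integral charts. By \Cref{manifold_corners_good_cover_corollary} there is a good cover $\{V_j\}_{j\in J}$ subordinate to it. Each $V_j$ is contained in the domain of some integral chart, so restricting that chart gives an integral chart $\phi_j\colon V_j\to\Omega_j\subseteq\R^n_{\ge0}$. Every nonempty finite intersection of the $V_j$ is contractible, hence connected.

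Next, for each $x\in M$ we choose a small rational polytope around $x$. Pick $j$ with $x\in V_j$. Write $y=\phi_j(x)\in\Omega_j$, where $\Omega_j$ is open in $\R^n_{\ge0}$. Choose a rational $\delta>0$ and a rational point $q$ close to $y$ with $q_i=0$ whenever $y_i=0$. Take the cube
\[
C=\bigl(q+[-\delta,\delta]^n\bigr)\cap\R^n_{\ge0}=\prod_i [\max(q_i-\delta,0),q_i+\delta].
\]
If $q$ is close enough to $y$ relative to $\delta$, then $y$ lies in $\Int_{\R^n_{\ge0}}C$. If $\delta$ is small, then $C\subseteq\Omega_j$. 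Set $P_x\coloneq\phi_j^{-1}(C)$. This is a compact $n$-dimensional rational polytope in $M$ in the sense of \Cref{affine_image_polytope}, and its interior relative to $M$ is a neighborhood of $x$.

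The sets $\Int_M P_x$ form an open cover of the compact space $M$, so finitely many of them, $P_1,\dots,P_k$, already cover $M$. This gives (1).

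For (2), first note that $\Int P_i=\Int_M P_i\setminus\partial M\subseteq\Int M$, which is one inclusion. For the reverse, take $z\in\Int M$. Then $z\in\Int_M P_i$ for some $i$, and $z\notin\partial M$, so $z\in\Int P_i$.

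For (3), each $P_i$ sits in a chart domain $V_{j(i)}$ of the good cover. For any $I$, the intersections $\bigcap_{i\in I'}V_{j(i)}$ over subsets $I'\subseteq I$ are empty or connected, since repeated indices do not change an intersection. Hence \Cref{finite_rational_intersection} shows that $\bigcap_{i\in I}P_i$ is empty or a rational polytope.

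The main obstacle is the local construction near corner points, where $y$ has some coordinates equal to zero. There the cube must be truncated by the orthant so that it stays inside $\Omega_j\subseteq\R^n_{\ge0}$. The truncation must also keep $x$ in the interior of $P_x$ relative to $M$, which is a neighborhood in $M$, not in $\R^n$. Requiring $q_i=0$ exactly on the coordinates where $y_i=0$ and using a product of intervals handles this. The same product-of-intervals description also makes rationality of the vertices immediate.
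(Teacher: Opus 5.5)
Your proposal is correct and follows essentially the same route as the paper. Both arguments take a good cover subordinate to the integral chart domains and place around each point a small rational cube (your orthant-truncated box) whose interior relative to the orthant contains the point; then they extract a finite subcover by compactness and apply \Cref{finite_rational_intersection} to the connected intersections of the good-cover members for (3). Your explicit restriction of the charts to the $V_j$ and your remark about repeated indices make precise two points the paper leaves implicit; the requirement $q_i=0$ when $y_i=0$ is harmless but not needed, since $|q-y|_\infty<\delta$ already puts $y$ in the interior of $C$ relative to the orthant.
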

\begin{proof}
By \Cref{manifold_corners_good_cover_corollary}, there exists a good open cover $\calW$ of $M$
such that every element of $\calW$ is contained in the domain
of an integral chart and every finite intersection of elements of $\mathcal{W}$ is connected.

For each point $x \in M$, let $W_x$ be an element of $\calW$ that contains $x$, 
let $\varphi \colon U \to \Omega\subseteq \R_{\ge 0}^n$ be an integral chart 
whose domain contains $W_x$, 
and let $\tQ_x \subseteq \R^n$ be a closed cube with rational vertices
that is contained in $\varphi(W_x)$
and whose interior relative to the standard orthant contains $\varphi(x)$.
Then $Q_x\coloneq \varphi^{-1}(\tilde Q_x)$ is a rational polytope in $M$ and $x\in \Int_M(Q_x)$.

Because $M$ is compact, there exist $x_1,\ldots, x_k \in M$
such that 
$M=\bigcup_{i=1}^k{\Int_M(Q_{x_i})}$.

For each $i \in \{ 1, \ldots, k \}$, let $P_i\coloneq Q_{x_i}$. 
Then $\{P_i\}_{i=1}^k$ covers $M$ and 
\[
\bigcup_{i=1}^k{\Int{P_i}}=\bigcup_{i=1}^k{(\Int_M(P_i)\setminus \partial M)}=\left(\bigcup_{i=1}^k{\Int_M(P_i)}\right)\setminus\partial M=M\setminus \partial M=\Int M\,.
\]
For every $I \subseteq \{1,\ldots,k\}$, because the intersection 
$\bigcap_{i\in I} W_{x_i}$ is connected, and by \Cref{finite_rational_intersection}, the intersection $\bigcap_{i\in I}{P_i}$ is either empty or again a rational polytope.
\end{proof}

\section{Main Results}

\begin{lemma}
    \label{manifold_quasi_ehrhart}
    Let $M$ be a compact $n$-dimensional integral-integral affine manifold with corners. Then $L_M$, $L_{\Int M},L_{\partial M}$ are quasi-polynomials.
\end{lemma}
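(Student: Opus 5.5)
We will use the cover from \Cref{rational_polytope_cover} together with inclusion--exclusion, so that each count reduces to counting rational polytopes inside single charts, where \Cref{rational_ehrhart} applies.

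Let $\{P_i\}_{i=1}^k$ be the $n$-dimensional rational polytopes given by \Cref{rational_polytope_cover}. For every nonempty $I\subseteq[k]$, write $P_I\coloneq\bigcap_{i\in I}P_i$; by the corollary this is either empty or a rational polytope in $M$.

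\textbf{The count $L_M$.} Since $M=\bigcup_i P_i$, inclusion--exclusion gives
\[
L_M(m)=\sum_{\emptyset\neq I\subseteq[k]}(-1)^{|I|+1}\#\bigl(M_{m^{-1}\Z}\cap P_I\bigr).
\]
Fix a nonempty $P_I$ and choose an integral chart $\psi$ whose domain contains $P_I$. By \Cref{if_one_then_all}, a point $x\in P_I$ lies in $M_{m^{-1}\Z}$ if and only if $\psi(x)\in m^{-1}\Z^n$. Hence $\#(M_{m^{-1}\Z}\cap P_I)=L_{\psi(P_I)}(m)$. By \Cref{polytope_affine_chart}, $\psi(P_I)$ is a rational polytope in $\R^n$, so \Cref{rational_ehrhart} shows this count is a quasi-polynomial. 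A finite signed sum of quasi-polynomials is again a quasi-polynomial: its period can be taken to be the product of the individual periods. So $L_M$ is a quasi-polynomial.

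\textbf{The count $L_{\Int M}$.} The corollary also gives $\Int M=\bigcup_i \Int P_i$, so inclusion--exclusion applies again, now to the sets $\bigcap_{i\in I}\Int P_i$. The step needing care is to identify this intersection with a relative interior of a rational polytope in a chart. Take a chart $\psi$ containing $P_I$; it also contains each $P_i$ with $i\in I$, or at least their parts near $P_I$, since $\bigcap\Int P_i\subseteq P_I$. By \Cref{integral_chart_operator_commute} we expect
\[
\psi\Bigl(\bigcap_{i\in I}\Int P_i\Bigr)=\Int\psi(P_I).
\]
This is the interior of a finite intersection of polytopes, and the interior of a finite intersection equals the intersection of the interiors. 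The equality holds within the chart's image because the transition maps are affine, as in the proof of \Cref{intersection_lemma}.

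Then $\Int\psi(P_I)$ is either empty, when $\psi(P_I)$ is lower-dimensional, or equals $\RInt\psi(P_I)$, when it is $n$-dimensional. In the second case, part (4) of \Cref{rational_ehrhart} gives $L_{\RInt\psi(P_I)}(m)=(-1)^nL_{\psi(P_I)}(-m)$, which is quasi-polynomial. So $L_{\Int M}$ is a quasi-polynomial.

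The main obstacle is transporting the intersection $\bigcap_{i\in I}\Int P_i$ into one chart, since the $P_i$ individually may lie in different charts. I would handle it exactly as in \Cref{intersection_lemma}: the connectedness of the good-cover intersections makes the relevant transition maps globally affine on the region that matters.

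\textbf{The count $L_{\partial M}$.} Finally, $M_{m^{-1}\Z}$ is the disjoint union of its points in $\Int M$ and in $\partial M$. Therefore
\[
L_{\partial M}=L_M-L_{\Int M},
\]
which is a difference of quasi-polynomials and hence a quasi-polynomial.
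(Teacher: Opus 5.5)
Your proposal is correct and follows the paper's proof: inclusion--exclusion over the cover from \Cref{rational_polytope_cover}, rational Ehrhart for each nonempty $P_I$ in a single chart, reciprocity for the $n$-dimensional $P_I$ (the lower-dimensional ones have empty interior), and $L_{\partial M}=L_M-L_{\Int M}$. The ``main obstacle'' you flag is not a real one: the paper gets $\bigcap_{i\in I}\Int P_i=\Int P_I$ directly in $M$, since interiors commute with finite intersections and so does removing $\partial M$. After that, \Cref{integral_chart_operator_commute} applies to the single polytope $P_I$, so no transition-map or connectedness argument is needed.
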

\begin{proof}
       Let $\{P_i\}_{i=1}^{N}$ be $n$-dimensional rational polytopes that cover $M$ such that $\bigcup_{i=1}^N\Int{P_i}=\Int M$ and every intersection is either empty or a rational polytope (\Cref{rational_polytope_cover}).
For each set of indices $\emptyset \neq I\subseteq \{1,\dots,N\}$,
denote $P_I\coloneq\bigcap_{i\in I}P_i$.
    By inclusion-exclusion,
    \begin{align*}
        L_M(m)=\sum_{x\in M_{m^{-1}\Z}}{1}=\sum_{
        \emptyset \neq I\subseteq\{1,\dots,N\}
         }{(-1)^{|I|+1}\# M_{m^{-1}\Z}\cap P_I}\,.
    \end{align*}
    For every $I\subseteq \{1,\ldots,N\}$ such that $P_I$ is non-empty, take an integral chart $\psi:U\rightarrow \Omega\subseteq \R^n$ such that $P_I\subseteq U$. By \Cref{polytope_affine_chart}, $\psi(P_I)$ is a rational polytope. By \Cref{if_one_then_all},
    \begin{align*}
        \#M_{m^{-1}\Z}\cap P_I=\#m^{-1}\Z^n\cap\psi(P_I).
    \end{align*}
By \Cref{rational_ehrhart} the RHS is quasi-polynomial in $m$, so 
$L_M(m)$ is an alternating sum of quasi-polynomials, hence a quasi-polynomial.

    By \Cref{affine_polytope_reciprocity}, if $\dim P_I=n$, then
    \[
        L_{P_I}(m)-L_{\partial P_I}(m)=L_{\Int{P_I}}(m)=(-1)^{n}L_{P_I}(-m)\,,
    \]
    hence $L_{\Int{P_I}},L_{\partial P_I}$ are quasi-polynomial.
    Since $I$ is finite,
    \begin{align*}
        \bigcap_{i\in I}{\Int P_i}=\bigcap_{i\in I}{((\Int_MP_i)\setminus \partial M)}=\left(\bigcap_{i\in I}{\Int_MP_i}\right)\setminus \partial M=\Int\left(\bigcap_{i\in I}P_i\right)\setminus \partial M=\Int P_I\,.
    \end{align*}
    By inclusion-exclusion on $\Int M$ and since $\Int{P_I}=\emptyset$ whenever $\dim P_I<n$,
    \begin{align}
        \label{full_dim_inclusion_exclusion}
        &L_{\Int{M}}(m)=\sum_{\emptyset \neq I\subseteq [N]}{(-1)^{|I|+1}L_{\Int{P_I}}(m)}=\sum_{\substack{\emptyset \neq I\subseteq [N]\\dim{P_I}=n}}{(-1)^{|I|+1}L_{\Int{P_I}}(m)}
    \end{align}
    so $L_{\Int M},L_{\partial M}$ are sums of quasi-polynomials, hence quasi-polynomial.
\end{proof}

\begin{theorem}
    \label{generalized_ehrhart}
    Let $M$ be a compact $n$-dimensional integral-integral affine manifold with corners. Then its lattice-counting function $L_M$ is polynomial and \[
        L_{\Int M}(m)=(-1)^{n}L_M(-m)\,.
    \]
\end{theorem}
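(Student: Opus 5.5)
We use a partition of unity subordinate to integral charts together with \Cref{orthant_asymptotics}, \Cref{local_reciprocity} and \Cref{quasi_vanishes}. By \Cref{manifold_quasi_ehrhart}, $L_M$ and $L_{\Int M}$ are already quasi-polynomials. It therefore suffices to show two things. First, there is a genuine polynomial $p(m)=\sum_{k=0}^n a_k m^k$ with $L_M(m)=p(m)+o(1)$ as $m\to\infty$. Second, $L_{\Int M}(m)=(-1)^n p(-m)+o(1)$. Given these, \Cref{quasi_vanishes} applied to $L_M-p$ shows that $L_M\equiv p$ as a quasi-polynomial on $\Z$. Applied to $L_{\Int M}(m)-(-1)^np(-m)$, it gives $L_{\Int M}(m)=(-1)^nL_M(-m)$. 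Here $L_M(-m)$ means the polynomial $p$ evaluated at $-m$, which is also the value of the quasi-polynomial extension to $\Z$.

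\textbf{Localization.} First cover $M$ by finitely many integral charts. Around a point $x$, choose a chart $\psi\colon U\to\Omega\subseteq\R^n_{\ge 0}$. After composing with a coordinate permutation (which lies in $\AZGL{n}$), assume $\psi(x)$ has its first $j$ coordinates zero and the rest positive. Let $b\in\Z^n$ have $b_i=0$ for $i\le j$ and $b_i$ a large positive integer for $i>j$. Translating by $b$ is integral-integral affine, so $\psi+b$ is still an integral chart. Shrink $U$ so that its image is $V\cap\R^n_{\ge0}$, with $V$ open in $\R^n$ and contained in $\{y_i>0 \text{ for } i>j\}$. Then $\psi(U)$ equals $V\cap\R^n_{\ge 0}$, and points of $U$ in $\Int M$ correspond exactly to $V\cap\R^n_{>0}$, since positivity of the last $n-j$ coordinates is automatic. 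By compactness, finitely many such charts $\psi_\alpha\colon U_\alpha\to\R^n_{\ge0}$ cover $M$.

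Take a smooth partition of unity $\{\rho_\alpha\}$ subordinate to $\{U_\alpha\}$; on a manifold with corners this exists in the usual way. Set $h_\alpha\coloneq \rho_\alpha\circ\psi_\alpha^{-1}$. This function is smooth with compact support in $V_\alpha\cap\R^n_{\ge0}$. Extend it to a smooth compactly supported function on $\R^n$ supported in $V_\alpha$, using Seeley extension or the fact that smoothness on corners means local extendability plus a cutoff. By \Cref{if_one_then_all},
\[
L_M(m)=\sum_\alpha\sum_{x\in m^{-1}\Z^n\cap\R^n_{\ge0}}h_\alpha(x).
\]
Because $h_\alpha$ is supported in $V_\alpha$, the analogous count over $\Int M$ is
\[
L_{\Int M}(m)=\sum_\alpha\sum_{x\in m^{-1}\Z^n\cap\R^n_{>0}}h_\alpha(x).
\]

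\textbf{Asymptotics and reciprocity.} For each $\alpha$, \Cref{orthant_asymptotics} gives constants $a^\alpha_k$ with
\[
\sum_{x\in m^{-1}\Z^n\cap\R^n_{\ge0}}h_\alpha(x)=\sum_k a^\alpha_k m^k+o(1).
\]
Then \Cref{local_reciprocity} gives
\[
\sum_{x\in m^{-1}\Z^n\cap\R^n_{>0}}h_\alpha(x)=(-1)^n\sum_k a^\alpha_k(-m)^k+o(1).
\]
Summing over the finitely many $\alpha$ yields both required expansions, with $a_k=\sum_\alpha a_k^\alpha$.

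\textbf{Main obstacle.} The delicate step is the localization. The values $h_\alpha$ must agree with the counting on the actual chart image, so the extension off $\R^n_{\ge0}$ must not matter. It does not, because the sums only sample $\R^n_{\ge0}$. The interior of $M$ must also match $\R^n_{>0}$ on the support, which the translation trick guarantees. A second point to check is that "polynomial" in the statement is meant on $\N$ with the extension used for $-m$, and \Cref{quasi_vanishes} handles this.
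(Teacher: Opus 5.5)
Your proposal is correct and follows essentially the same route as the paper: a partition of unity subordinate to integral charts, \Cref{orthant_asymptotics} and \Cref{local_reciprocity} applied chart by chart, and then \Cref{manifold_quasi_ehrhart} together with \Cref{quasi_vanishes} to turn the $o(1)$ statements into exact identities. The permutation/translation normalization of the charts is unnecessary, because for any integral chart $\psi\colon U\to\Omega$ one already has $\psi(U\cap\Int M)=\Omega\cap\R^n_{>0}$ by invariance of the boundary, and your step relies on this fact anyway; if you keep the normalization, shrink $U$ before translating, since otherwise $\psi(U)+b$ need not be open in $\R^n_{\ge 0}$.
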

\begin{proof}
    Because $M$ is compact, it has a finite integral-integral affine atlas with corners.
Let $\{\rho_i\}_{i=1}^{N}$ be a partition of unity
that is subordinate to the cover of $M$ by the domains of the charts
in such an atlas.
For each $i \in \{1,\ldots,N\}$,
let $\psi_{i} \colon U_{i} \to \Omega_{i} \subseteq \R^n_{\ge 0}$
be an integral chart whose domain contains
the support $\supp \rho_i$ of $\rho_i$ in $M$.
\onote{Do we really need this? I think choosing an extendable partition of unity is more general than extending with Whitney}
Then the subset $\psi_{i}(\supp \rho_i)$ of $\Omega_i$ is compact,
so $\rho_i \circ \psi_{i}^{-1}$
extends by zero to a compactly supported
smooth function on $\R^n_{\ge 0}$ which can be extended by the Whitney extension theorem to a smooth compactly supported function $h_i$ on $\R^n$.
By \Cref{orthant_asymptotics} there exist $a_{i,0}\dots a_{i,n}\in \R$ such that
\begin{align*}
        &
        \sum_{x\in M_{m^{-1}\Z}}{\rho_i(x)}=\sum_{y\in m^{-1}\Z^n \cap \Omega_i }{\rho_i(\psi^{-1}_{i}(y))}=\sum_{y\in m^{-1}\Z^n\cap \R_{\ge 0}^n}{h_i(y)}=\sum_{k=0}^n{a_{i,k}m^k} +o(1)
\end{align*}
as $m\rightarrow \infty$.
    Thus
    \begin{align*}
        &L_M(m)=\#M_{m^{-1}\Z}=\sum_{x\in M_{m^{-1}\Z}}{1}=\sum_{x\in M_{m^{-1}\Z}}{\sum_{i=1}^{N}{\rho_i(x)}}=\sum_{i=1}^{N}{\sum_{x\in M_{m^{-1}\Z}}{\rho_i(x)}}\\
        &=\sum_{i=1}^{N}{\left(\sum_{k=0}^n{a_{i,k}m^k\, +o(1)}\right)}
        =\sum_{k=0}^n{\left(\sum_{i=1}^N{a_{i,k}}\right)m^k}\,+o(1)\,.
    \end{align*}
    $L_M$ is quasi-polynomial by \Cref{manifold_quasi_ehrhart} so by \Cref{quasi_vanishes}, \[
  L_M(m)=\sum_{k=0}^n{\left(\sum_{i=1}^N{a_{i,k}}\right)m^k}\,.
    \]

    By \Cref{local_reciprocity},
      \begin{align*}
        &L_{\Int M}(m)=\sum_{x\in (\Int M)_{m^{-1}\Z}}{1}
        =\sum_{x\in(\Int M)_{m^{-1}\Z}}\sum_{i=1}^{N}{\rho_i(x)}=\sum_{i=1}^{N}{\sum_{x\in(\Int M)_{m^{-1}\Z}}{\rho_i(x)}}\\
        &=\sum_{i=1}^{N}{\sum_{y\in m^{-1}\Z^n\cap\R_{>0}^n\cap\Omega_i}{\rho_i(\psi_i^{-1}(y))}}=\sum_{i=1}^{N}{\sum_{y\in m^{-1}\Z^n\cap \R_{>0}^n}{h_i(y)}}
        \\&=\sum_{i=1}^{N}{(-1)^n\sum_{k=0}^na_{i,k}(-m)^k+o(1)}\\&
        =(-1)^n\sum_{k=0}^n{\left(\sum_{i=1}^Na_{i,k}\right)(-m)^k}+o(1)=(-1)^nL_M(-m)+o(1)\,.
    \end{align*}
    $L_{\Int M}$ is quasi-polynomial by \Cref{manifold_quasi_ehrhart} 
    so by \Cref{quasi_vanishes},
    \[
        L_{\Int{M}}(m)=(-1)^nL_{M}(-m)\,.
    \]
\end{proof}

\begin{corollary}
    Let $M$ be a compact $n$-dimensional integral-integral affine manifold with corners. Then $L_{\partial M}$ is a polynomial with \[
        L_{\partial M}(m)=L_{M}(m)+(-1)^{n+1}L_M(-m)\,.
    \]
\end{corollary}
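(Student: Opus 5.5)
The plan is to read this off directly from \Cref{generalized_ehrhart} using the disjoint decomposition $M=\Int M\sqcup \partial M$. First I check that this decomposition is compatible with lattice points. Every $x\in M_{m^{-1}\Z}$ lies either in $\Int M=M\setminus\partial M$ or in $\partial M$, and never in both. By the definition of $L_{M,A}$ (with $M$ omitted), $L_{\Int M}(m)=\#(M_{m^{-1}\Z}\cap \Int M)$ and $L_{\partial M}(m)=\#(M_{m^{-1}\Z}\cap\partial M)$. Hence for every $m\in\N$,
\[
L_M(m)=L_{\Int M}(m)+L_{\partial M}(m).
\]
All three quantities are finite: $L_M$ is finite because, by \Cref{manifold_quasi_ehrhart}, it agrees with a quasi-polynomial (compactness together with the finite polytope cover gives finitely many points for each $m$).

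Next I substitute the reciprocity from \Cref{generalized_ehrhart}, namely $L_{\Int M}(m)=(-1)^nL_M(-m)$. This gives
\[
L_{\partial M}(m)=L_M(m)-(-1)^nL_M(-m)=L_M(m)+(-1)^{n+1}L_M(-m)
\]
for all $m\in\N$. Here $L_M(-m)$ means the polynomial that $L_M$ coincides with, evaluated at $-m$. This is the same convention the theorem already uses.

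Polynomiality of $L_{\partial M}$ then follows. By \Cref{generalized_ehrhart}, $L_M$ agrees on $\N$ with a polynomial $p$. The right-hand side $p(m)+(-1)^{n+1}p(-m)$ is again a polynomial in $m$. So $L_{\partial M}$ agrees with a polynomial on $\N$, and we take the identity above as its extension to all of $\Z$. This extension is consistent with \Cref{manifold_quasi_ehrhart}: a quasi-polynomial that agrees with a polynomial on $\N$ equals that polynomial, by \Cref{quasi_vanishes} applied to the difference.

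The only step needing care is the bookkeeping that $\partial M$ and $\Int M$ partition $M$, so that lattice points are neither double counted nor missed. The paper defines $\Int M$ as $M\setminus\partial M$, as in the proof of \Cref{rational_polytope_cover}, so this is immediate. I expect no real obstacle; the corollary is a formal consequence of the theorem.
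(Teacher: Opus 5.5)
Your proof is correct and matches the paper's argument: you use the partition $M=\Int M\sqcup\partial M$ to write $L_{\partial M}=L_M-L_{\Int M}$, then substitute $L_{\Int M}(m)=(-1)^nL_M(-m)$ from \Cref{generalized_ehrhart}. Your additional remarks on finiteness and on why the right-hand side is a polynomial make explicit what the paper leaves implicit.
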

\begin{proof}
    By \Cref{generalized_ehrhart}
    \begin{align*}
        &L_{\partial M}(m)=L_{M}(m)-L_{\Int M}(m)=L_M(m)-(-1)^nL_M(-m)\\
        &=L_{M}(m)+(-1)^{n+1}L_{M}(-m)\,.
    \end{align*}
\end{proof}

\begin{corollary}
    \label{leading_constant_coefficients}
    Let $M$ be a compact $n$-dimensional integral-integral affine manifold with corners and let $L_M(m)=\sum_{k=0}^n{a_km^k}$ be its lattice-counting function. Then
    \begin{enumerate}
        \item $a_n=\Vol(M)$ (see \Cref{measure_atlas}) \label{coefficients:volume}
        \item $a_0=\chi(M)$ \label{coefficients:euler}
    \end{enumerate}
\end{corollary}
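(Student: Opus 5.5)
We prove both coefficient identities from the polytope cover of \Cref{rational_polytope_cover} and the inclusion--exclusion formula for $L_M$ used in \Cref{manifold_quasi_ehrhart}. Write $P_I=\bigcap_{i\in I}P_i$. Then
\[
L_M(m)=\sum_{\emptyset\neq I\subseteq[N],\,P_I\neq\emptyset}(-1)^{|I|+1}L_{\psi_I(P_I)}(m),
\]
where $\psi_I$ is an integral chart around $P_I$. By \Cref{generalized_ehrhart}, $L_M$ is a polynomial. Each summand is a quasi-polynomial of degree at most $\dim P_I\le n$, by \Cref{rational_ehrhart}.

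For the leading coefficient $a_n$, divide by $m^n$ and let $m\to\infty$. If $\dim P_I<n$, then $L_{\psi_I(P_I)}(m)=O(m^{n-1})$, so these terms contribute nothing in the limit. If $\dim P_I=n$, \Cref{rational_ehrhart}(3) gives $L_{\psi_I(P_I)}(m)=\Vol(\psi_I(P_I))m^n+O(m^{n-1})$, and $\Vol(\psi_I(P_I))=\Vol(P_I)$ because charts preserve the measure (\Cref{measure_atlas}). Hence
\[
a_n=\sum_I(-1)^{|I|+1}\Vol(P_I).
\]
The sets $P_I$ with $\dim P_I<n$ have measure zero, so we may include them in this sum. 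Inclusion--exclusion for the measure of a finite union then gives $a_n=\Vol(\bigcup_i P_i)=\Vol(M)$.

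For the constant term $a_0$, the difficulty is that the rational Ehrhart quasi-polynomials have periodic constant terms that need not equal $1$. The fix is to work with a dilation. Let $d$ be a common multiple of all denominators of the vertices of all $\psi_I(P_I)$. In the dilated manifold $dM$, each $P_I$ becomes a lattice polytope, because $d\psi_I$ is an integral chart of $dM$ and $d\psi_I(P_I)$ has integer vertices. By \Cref{rational_ehrhart}(5), each $L_{d\psi_I(P_I)}$ is then a polynomial with constant term $1$. Applying the same inclusion--exclusion on $dM$ gives
\[
L_{dM}(0)=\sum_{P_I\neq\emptyset}(-1)^{|I|+1}=\chi(\mathcal N),
\]
where $\mathcal N$ is the nerve of the closed cover $\{P_i\}$. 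On the other hand, \Cref{lattice_count_dilation} gives $L_{dM}(m)=L_M(dm)$, so $L_{dM}(0)=L_M(0)=a_0$.

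It remains to show $\chi(\mathcal N)=\chi(M)$, and this is the main obstacle. Every nonempty $P_I$ is a convex polytope in a chart, hence contractible. So $\{P_i\}$ is a finite closed cover of a compact space by closed sets whose nonempty finite intersections are all contractible. The nerve theorem for such covers gives $M\simeq|\mathcal N|$, and therefore $\chi(\mathcal N)=\chi(M)$. The version of the nerve theorem needed here is the one for closed covers of compact triangulable spaces, or covers by subcomplexes. One can check that it applies, for instance by noting that each $P_i$ is a neighborhood deformation retract, or by thickening each $P_i$ slightly inside its chart to an open convex set without changing the nerve. If the nerve argument proves delicate, a fallback is to compute $\chi$ by Mayer--Vietoris over the cover: each $P_I$ has $\chi(P_I)=1$, and the same alternating sum results.
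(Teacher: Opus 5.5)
Your proposal is correct and follows essentially the same proof as the paper. You use the same polytope cover and inclusion--exclusion to get the volume, and the same dilation to make every $P_I$ a lattice polytope for the constant term. The only variation is that you identify $\sum_{P_I\neq\emptyset}(-1)^{|I|+1}$ with $\chi(M)$ through the nerve theorem, whereas the paper applies additivity of the Euler characteristic with $\chi(P_I)=1$ directly; that is your Mayer--Vietoris fallback.
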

\begin{proof}
    (\ref{coefficients:volume}) Let $\{P_i\}_{i=1}^N$ be rational polytopes that cover $M$ (\Cref{rational_polytope_cover}). For every $\emptyset\neq I\subseteq [N]$, set $P_I\coloneq \bigcap_{i\in I}{P_i}$, and let $L_{P_I}(m)=\sum_{k=0}^n{a_k^{P_I}(m)m^k}$ be its lattice-counting function (\Cref{rational_ehrhart}). By inclusion-exclusion, \begin{align*}
        &L_M(m)=\sum_{\emptyset \neq I\subseteq [N]}{(-1)^{|I|+1}L_{P_I}(m)}\,.
    \end{align*}
    Hence, 
    \begin{align*}
        a_n=\sum_{\emptyset \neq I\subseteq [N]}{(-1)^{|I|+1}a^{P_I}_n(m)}\,.
    \end{align*}
    By \Cref{rational_ehrhart} the $n$-th coefficient of the lattice-counting function of every rational polytope in $\R^n$ is its $n$-th dimensional volume (even when the polytope is not full dimensional). Thus,
    \begin{align*}
        a_n=\sum_{\emptyset\neq I\subseteq [N]}{(-1)^{|I|+1}\Vol(P_I)}=\Vol(\bigcup_{i=1}^N{P_i})=\Vol(M)\,.
    \end{align*}

    (\ref{coefficients:euler}) Let $k_0\in \N$ such that for every $\emptyset \neq I\subseteq [N]$ the vertices of $P_I$ lie on $M_{k_0^{-1}\Z}$. By \Cref{integers_in_dilated_manifold} each $P_I$ has integer vertices under any integral chart of $k_0M$.
    By \Cref{rational_ehrhart}, whenever $P_I$ is non-empty, the constant coefficient of $L_{P_I}$ with respect to $k_0M$ is $1$, which coincides with the Euler characteristic of $P_I$. If $P_I$ is empty then $\chi(P_I)=0$.
    By inclusion-exclusion the constant coefficient of $L_{k_0M}$ is 
    \begin{align*}
        \sum_{\substack{\emptyset\neq I\subseteq [N]\\ \emptyset \neq P_I}}(-1)^{|I|+1}=\sum_{\emptyset \neq I\subseteq [N]}{(-1)^{|I|+1}\chi(P_I)}=\chi(\bigcup_{i=1}^N{P_i})=\chi(M)
    \end{align*}
     By \Cref{lattice_count_dilation} the polynomial $L_{kM}$ has the same constant coefficient for every $k\in \N$ hence $L_M$ has constant coefficient $\chi(M)$.
\end{proof}
\begin{corollary}
    Let M be a closed $n$-dimensional integral-integral affine manifold with $n\ge 1$. Then $\chi(M)=0$.
\end{corollary}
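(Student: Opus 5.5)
Since $M$ is closed, $\partial M=\emptyset$ and hence $\Int M=M$. In particular $L_{\Int M}=L_M$ as functions on $\N$. By \Cref{generalized_ehrhart}, $L_M$ is a polynomial $\sum_{k=0}^n a_k m^k$ satisfying $L_{\Int M}(m)=(-1)^nL_M(-m)$. Combining the two gives
\[
L_M(m)=(-1)^nL_M(-m)\quad\text{for all } m\in\N .
\]
Both sides are polynomials in $m$, so the identity holds as polynomials. Comparing coefficients of $m^k$ yields $a_k=(-1)^{n+k}a_k$, so $a_k=0$ whenever $n+k$ is odd.

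I would next show that the case $n$ odd gives $\chi(M)=0$. Here $k=0$ has $n+k$ odd, so $a_0=0$, and \Cref{leading_constant_coefficients} gives $\chi(M)=a_0=0$. No manifold-topology input is needed; this also recovers the classical fact that closed odd-dimensional manifolds have vanishing Euler characteristic.

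The main obstacle is $n$ even and $n\ge 2$, where the reciprocity says nothing about $a_0$. I would reduce to the odd case by taking a product with a circle. Let $S^1=\R/\Z$ with its integral-integral affine structure, and $N\coloneq S^1\times M$, a closed $(n+1)$-dimensional integral-integral affine manifold with product charts. Products of integral charts are integral charts, so $N_{m^{-1}\Z}=S^1_{m^{-1}\Z}\times M_{m^{-1}\Z}$ and $L_N(m)=mL_M(m)$. This is the same computation as the mapping-torus example in the introduction with $\psi=\mathrm{id}$. The constant term of $L_N$ is therefore $0$, which by \Cref{leading_constant_coefficients} gives $\chi(N)=0$ but carries no information about $\chi(M)$. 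The product trick alone thus fails.

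Instead, for $n$ even I would use the parity vanishing more directly: $a_k=0$ for $k$ odd, so $L_M(m)=\sum_{k \text{ even}}a_km^k$. I would then look for a second relation that kills $a_0$. The natural candidate is dilation invariance, \Cref{lattice_count_dilation}, but it only says $L_{kM}(m)=L_M(km)$, which fixes the constant term and does not force it to vanish. At this point I would fall back on topology: a closed manifold carrying an affine structure has a flat torsion-free connection, hence a flat tangent bundle. By the Chern--Weil/Gauss--Bonnet argument the Euler class of a flat orientable bundle is rationally zero; this is Smillie's/Kostant--Sullivan territory and is known for affine manifolds whose linear holonomy lies in $\GL(n,\Z)$-type groups. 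Since such a general statement is delicate, I would first try to prove it via the formula $a_0=\sum_i\sum_{y}h_i(y)$-asymptotics. In the boundaryless case the expansion of each $h_i$ over $m^{-1}\Z^n$ in all of $\R^n$ is $m^n\int h_i+O(m^{-\infty})$ by Poisson summation, since interior charts need no orthant. Summing over $i$ then gives $L_M(m)=\Vol(M)m^n+o(1)$, so every lower coefficient, including $a_0$, vanishes, and $\chi(M)=a_0=0$ for every $n\ge1$. This argument makes the parity split unnecessary, so it is the route I would actually write up.
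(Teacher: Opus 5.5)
Your final route is correct and is essentially the paper's proof. Both arguments get $a_0=\chi(M)$ from \Cref{leading_constant_coefficients} and $a_0=0$ for closed $M$ with $n\ge 1$; the paper cites the latter from Elisha--Karshon--Loizides, whereas you re-derive it inline (Poisson summation for the $h_i$, which are plain extensions by zero since $\partial M=\emptyset$, then \Cref{quasi_vanishes}), the same Poisson-summation method the introduction attributes to that reference. The parity and product detours can be dropped, and your aside that flat orientable bundles have rationally vanishing Euler class is false in general (Milnor--Wood, Smillie), though you rightly never rely on it.
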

\begin{proof}
    By \Cref{leading_constant_coefficients} the constant coefficient of $L_M$ is $\chi(M)$ and by Elisha, Karshon and Loizides \cite[Corollary 3.14]{elisha2026integralpointsvolumeintegralintegral} the constant coefficient of $L_M$ is zero whenever $n\ge 1$.
\end{proof}
\begin{remark}
    This is a special case of Chern's conjecture. See, e.g., Klingler \cite{klingler_cherns_2017} or Arias Abad and Vélez Vásquez \cite{abad_lectures_2025}.
\end{remark}

\appendix
\printbibliography
\end{document}